\documentclass[a4paper,11pt]{amsart}
\usepackage{amsfonts,amssymb,amsmath,amsthm,latexsym}
\usepackage[usenames]{color}
\usepackage{inputenc}
\usepackage{enumerate}
\usepackage{tikz}
\usetikzlibrary{shapes}
\usetikzlibrary{plotmarks}
\usepackage{mathtools}
\usepackage{float} 

\setlength{\textheight}{24cm}
\setlength{\textwidth}{18cm}
\voffset=-.85in
\hoffset= -1.00in

\newtheorem{teor}{Theorem}[section]

\newtheorem{obs}[teor]{Remark}

\newtheorem{prop}[teor]{Proposition}
\newtheorem{remark}[teor]{Remark}

\newtheorem{coro}[teor]{Corollary}
\newcommand{\N}{\mathbb{N}}
\newcommand{\C}{\mathbb{C}}
\newcommand{\Zer}{\hbox{\rm Zer}}
\newcommand{\ord}{\hbox{\rm ord}}
\newcommand{\supp}{\hbox{\rm supp}}

   \usepackage[pagebackref]{hyperref}
  \hypersetup{
colorlinks=true,
linkcolor=cyan,
citecolor=cyan
}

\setlength\parindent{0pt}

\subjclass[2000]{14J17 (primary), 32S15 (secundary)}
\keywords{Discriminant curve, non degenerate singularity, Newton polygon, Zariski invariant, Milnor number, Tjurina number.}

\begin{document}
\title{ Topological type of discriminants of some special families}

\author{Evelia R. Garc\'{i}a Barroso}
\address[Evelia R. Garc\'{i}a Barroso]{Dpto. Matem\'{a}ticas, Estad\'{\i}stica e I.O. Secci\'on de Matem\'aticas, Universidad de La Laguna. Apartado de Correos 456\\
38200 La Laguna, Tenerife, Espa\~na.}
\email{ergarcia@ull.es}

\author{M. Fernando Hern\'andez Iglesias}
\address[M. Fernando Hern\'andez Iglesias]{Dpto. Ciencias - Secci\'{o}n Matem\'{a}ticas, Pontificia Universidad Cat\'{o}lica del Per\'{u}, Av. Universitaria 1801,
San Miguel, Lima 32, Peru}
\email{mhernandezi@pucp.pe}

\thanks{The first-named author was partially supported by the Spanish Project MTM 2016-80659-P}

\date{\today}

\begin{abstract}
We will  describe the topological type of the discriminant curve  of the morphism $(\ell, f)$, where $\ell$ is a smooth curve and $f$ is an irreducible curve  (branch) of multiplicity less than five or a branch that the difference between its Milnor number  and Tjurina number is less than 3. We prove that for a branch of these families, the topological type of the discriminant curve is determined by the semigroup, the Zariski invariant and at most two other analytical invariants of the branch.
\end{abstract}

\maketitle
\tableofcontents


\section{Introduction}

Let $f(x,y)\in  \mathbb C\{x,y\}$ irreducible. The germ of irreducible analytic curve ({\bf branch}) of equation $f(x,y)=0$ is denoted by $C\equiv f(x,y)=0$. Observe that the curves $f(x,y)=0$ and $u(x,y)f(x,y)=0$ are the same, for any unit $u(x,y)\in  \mathbb C\{x,y\}$. The {\bf multiplicity} of $C$, denoted by $m(C)$, is by definition the order of the power series $f(x,y)$. Suppose that $C$ has multiplicity $n$. We will say that $C$ is {\bf singular} if $n>1$. Otherwise $C$ is a {\bf smooth} curve. The {\bf initial form} of $f(x,y)$ is the sum of all terms of $f(x,y)$ of degree $n$. Since $f$ is irreducible its initial form is a power of a linear form. After a linear change of coordinates, if necessary, we can suppose that the initial form of $f(x,y)$ is $y^n$.  Suppose that $C$ has multiplicity $n>1$. We denote by $\mathbb N^*$ the set of positive integers. By Newton's theorem (\cite[Theorem 3.8]{Hefez}) there is $\alpha(x^{1/n}) \in \mathbb{C}\{x\}^{*}=\bigcup_{m\in \mathbb N^*}\mathbb{C}\{x^{1/m}\}$ with $\alpha(0)=0$ such that $f(x,\alpha(x^{1/n}))=0$ and we say that $\alpha(x^{1/n})  \in \mathbb{C}\{x\}^{*}$ is a {\bf Newton-Puiseux root} of $C$. Let us denote by $\Zer(f)$ the set of Newton-Puiseux roots of $C$. Let $\alpha(x^{1/n})  \in \Zer(f)$. After Puiseux theorem (\cite[Corollary 3.12]{Hefez}) we have that $\Zer(f)=\left\{\alpha_j:=\alpha(\omega^j x^{1/n})\right\}_{j=1}^n$, where $\omega$ is a $n$th-primitive root of the unity. Hence
\begin{equation}
\label{ecuaci—n}
f(x,y)=u(x,y)\displaystyle\prod_{j=1}^{n}\Big(y-(\alpha(\omega^{j}x^{1/n}))\Big),
\end{equation}
where $u \in \mathbb{C}\{x,y\}$ is a unit. After a change of coordinates, if necessary, we can write $\alpha(x)=\sum_{i\geq s_1}a_i x^{i/n}$, where $s_1>n$ and $s_1\not\equiv 0$ mod $n$.

\medskip

If we put $x=t^{n}$, where $t$  is a new variable, the Newton-Puiseux root $\alpha(x^{1/n})$ can be written as
\[
\left\{ \begin{array}{rll}
x(t) &=& t^{n}\\
y(t) &=& \displaystyle\sum_{i\geq s_1}a_{i}t^{i},
\end{array}
\right.
\]
what we will call {\bf\emph{Puiseux parametrisation }}of $C$. 

\medskip

\noindent There are  $g\in \mathbb N$ and a sequence $\left(\beta_{0}=n<\beta_1=s_1<\beta_2<\cdots<\beta_{g}\right)$ of nonnegative integers  such that 
\begin{equation}\label{lisas}
\{\ord(\alpha_i-\alpha_j)\;:\; \alpha_i,\alpha_j\in \Zer(f),\;i\neq j\}=\left\{\frac{\beta_l}{\beta_0}\;:\;1\leq l\leq g\right\}\subseteq \mathbb Q\backslash  \mathbb Z.
\end{equation} 

\noindent The sequence $(\beta_{0},\cdots,\beta_{g}) \subseteq \mathbb{N}$ is called sequence of {\bf \emph{characteristic exponents}}  of $C$. The number $g$ is a topological invariant called {\bf genus} of the branch $C$.\\

\noindent Consider the set
\[
S(C):=\{i_0(f,h)\,:\, h\in \C\{x,y\},\,\,h\not\equiv 0\, \hbox{\rm mod } f\},
\]

\noindent where $i_0(f,h)=\dim_{\C}\C\{x,y\}/(f,h)$ is the {\bf intersection number} (or {\bf intersection multiplicity}) of $f(x,y)=0$ and $h(x,y)=0$ at the origin. It is well-known that $S(C)$ is a semigroup called {\bf semigroup of values} of the branch $C$. The complementary of $S(C)$ in $\N$ is finite. The {\bf conductor} of $S(C)$ is by definition the greatest natural number $c\in \N$ such that for every natural number  $N\in \N,$ with $N\geq c$,  is an element of $S(C)$.

\medskip

\noindent The semigroup $S(C)$ admits a minimal system of generators $(s_0,s_1,\ldots,s_g)$, where  $s_{i-1}<s_{i}$, $g$ is the genus of $C$, $s_0=n=i_0(f,x)$ and
$s_1=m=:i_0(f,y)$. It is a well-known property of $S(C)$ (\cite[page 88, inequality (6.5)]{Hefez}) that $e_k:=\gcd(s_0,\ldots,s_k)=\gcd(\beta_0,\ldots,\beta_k)$ for $0\leq k\leq g$ and $e_{k-1}s_k<e_ks_{k+1}$ for $1\leq k\leq g-1$.
 
\medskip 
If $n>2$ we  have $c\geq s_1+1$. Let $q$ be the number of natural numbers between $s_1$ and $c$ which are not in $S(C)$. We can verify (see \cite[page 21]{Zariski}) that $q=\frac{c}{2}-s_1+\left[\frac{s_1}{s_0}\right]+1$, for  $s_0=n>2$, where $\left[ z \right]$ denotes the integral part of $z\in \mathbb R$.

\medskip

Let $f,h\in  \C\{x,y\}$ be irreducible power series. After Halphen-Zeuthen formula we get
\begin{equation}
\label{Halphen}
i_0(f,h)=\sum_{i,j}\ord (\gamma_j-\alpha_i),
\end{equation}
where $\Zer f=\{\alpha_i\}_i$ y $\Zer h=\{\gamma_j\}_j.$ 

\medskip

\noindent Two branches $C$ and $D$ have the {\bf same topological type} (or they are {\bf equisingular}) if they are topologically equivalent as embedded surfaces in $\C^2$. It is well-known (\cite[Chapter II]{Zariski}) that two branches are equisingular if and only if they have the same semigroup of values or equivalently they have the same characteristic exponents. Denote by ${\mathcal E}(C)$ the set of branches which are equisingular to $C$. In the set
${\mathcal E}(C)$ we define the next equivalence relation: two branches $D_1$ and $D_2$ in ${\mathcal E}(C)$ are {\bf analytically equivalent}, and we will denote it by $D_1\cong D_2$ if there exists an analytic isomorphism $T:U_1\longrightarrow U_2$ such that $U_i$ are neighbourhoods of the origin, $D_i$ is defined in $U_i$, $1\leq i\leq 2$ and $T(D_1\cap U_1)=D_2\cap U_2$. The {\bf moduli space} of the equisingularity class ${\mathcal E}(C)$ is the quotient space ${\mathcal E}(C)/\cong$. Let $\nu_1<\nu_2<\ldots<\nu_q$ be the integers of the set $\{s_1+1,\ldots,c-1\}$ which are not in $S(C)$. Zariski proved \cite[Proposition 1.2, Chapter III]{Zariski} that there exists a branch $\overline C$ analytically equivalent to $C$, parametrized as follows:

\begin{equation}
\label{short-paramet}
\left\{ \begin{array}{l} \bar x=t^n\\ \bar y=t^{s_1}+\sum_{i=1}^qa_{i}t^{\nu_i}. \end{array} \right.
\end{equation}

Put $\Omega:=\{\omega=g(x,y)dx+h(x,y)dy\,:\,g,h\in \C\{x,y\}\}$. If $(x(t),y(t))$ is a Puiseux parametrisation of $C$  we put

\[
\nu(\omega):=\ord \left( f(x(t),y(t))x'(t)+ g(x(t),y(t))y'(t)\right)+1.
\]

\smallskip

Let $\Lambda:=\{\nu(\omega)\,:\,\omega \in \Omega\}$. If $\Lambda \backslash S(C)\neq \emptyset$ then the number $\lambda:=\min \left( \Lambda \backslash S(C)\right)-v_0$ is an analytical invariant of $C$ called {\bf Zariski invariant}.

\medskip

After \cite[Lemma 2.6, Chapter IV]{Zariski} we can rewrite the parametrization (\ref{short-paramet}) in the next form:

\[
\left\{ \begin{array}{l} \bar x=t^n\\ \bar y=t^{s_1}+at^{\lambda}+\hbox{\rm a finite sum of terms  $a_{i}t^{\nu_i}$}, \end{array} \right.
\]

\noindent where $a\neq 0$, $\nu_i>\lambda>s_1$.

\medskip

Let $f(x,y)=\sum_{i,j}a_{ij}x^iy^y\in \mathbb C\{x,y\}$. The {\bf support} of $f$ is $\supp(f):=\{(i,j)\in \mathbb N^2\;:\; a_{ij}\neq 0\}$. The {\bf Newton polygon} of $f$, denoted by ${\mathcal N}(f)$, is by definition the convex hull of $\supp(f)+\mathbb R_{\geq 0}^2$. Observe that ${\mathcal N}(f)={\mathcal N}(uf)$ for any unit $u\in 
\mathbb C\{x,y\}$. Nevertheless the Newton polygon depends on coordinates. The {\bf  inclination} of any compact face $L$ of ${\mathcal N}(f)$ is by definition de quotient of the length of the proyection of $L$ over the horizonal  axis by the length of its projection over the vertical axis. The Newton polygon of $f$ gives information on the Newton-Puiseux roots of $f(x,y)=0$. More precisely, if $L$ is a compact face of ${\mathcal N}(f)$ of inclination \mdseries{i} and the length of its projection over the vertical axis is $\ell_2$ then $f$ has $\ell_2$ Newton-Puiseux roots of order \mdseries{i} (see \cite[Lemme 8.4.2]{Chenciner}).
\medskip

\begin{center}
\begin{tikzpicture}[x=0.7cm,y=0.7cm]
\tikzstyle{every node}=[font=\small]
\draw[->] (0,0) -- (5,0) node[right,below] {$\;$};
\draw[->] (0,0) -- (0,4) node[above,left] {$\;$};
\draw[thick] (2.5,2.7) node[left] {$L$};
\draw[thick] (4.8,2) node[left] {\mdseries i=$\frac{\ell_1}{\ell_2}$};
\node[draw,circle,inner sep=1pt,fill] at (1,3) {};
\node[draw,circle,inner sep=1pt,fill] at (4,1) {};
\draw[dashed] (0,3) --(1,3);
\draw[dashed] (0,1) --(4,1);
\draw[dashed] (4,0) --(4,1);
\draw[dashed] (1,0) --(1,3);
\node[draw,circle,inner sep=1pt,fill] at (4,1) {};
\draw[ultra thick] (1,3) -- (4,1);
\draw[<->] (-0.5,3) to [bend right] (-0.5,1);
\draw[-] (-1.4,2.5) node[right,below] {$\ell_2$};
\draw[<->] (1,-0.5) to [bend right] (4,-0.5);
\draw[-] (2.7,-0.9) node[below] {$\ell_1$};
\end{tikzpicture}
\end{center}

We say that $f(x,y)\in \mathbb C\{x,y\}$ is {\bf non degenerate} in the sense of Kouchnirenko, with respect to the coordinates $(x,y)$, if for any compact edge $L$ of ${\mathcal N}(f)$ the polynomial $f_L(x,y):=\sum_{(i,j)\in L\cap \supp(f)}a_{ij}x^iy^j$ does not have critical points outside the axes $x=0$ and $y=0$, or equivalently, the polynomial $F_L(z):=\frac{f_L(1,z)}{z^{j_0}}$ has no multiple roots, where $j_0:=\min \{j\in \mathbb N\;:\; (i,j)\in L\}$. Since ${\mathcal N}(f)={\mathcal N}(uf)$, for any unit $u\in 
\mathbb C\{x,y\}$, the notion of non degeneracy is extended to curves. The topological type of non degenerate plane curves are completely determined by their Newton polygons (see \cite[Proposition 4.7]{Oka} and \cite[Theorem 3.2]{GB-L-P2007}). 

\medskip

\noindent Let $\ell(x,y)=0$ be a 
smooth curve and $f(x,y)=0$ defining an isolated singularity at $0\in \mathbb C^2$.  Assume that $\ell(x,y)$ does not divide $f(x,y)$ and consider the morphism 

\begin{equation}
\label{disc-m}
\begin{array}{rll}
(\ell ,f)\colon (\mathbb C^{2},0)&\longrightarrow& (\mathbb C^2,0)\\
(x,y)&\longrightarrow& (u,v):=(\ell(x,y),f(x,y)).
\end{array}
\end{equation}

\noindent There are two curves associated with $(\ell ,f)$:  the {\bf polar curve} 
$\frac{\partial \ell}{\partial x}\frac{\partial f}{\partial
y} - \frac{\partial \ell}{\partial y}\frac{\partial f}{\partial x}=0$ and its direct image $D(u,v)=0$ 
which is called the {\bf discriminant curve} of the morphism $(\ell ,f)$.

\medskip

The topological type of the polar curve depends on the analytical type of $\ell(x,y)=0$ and $f(x,y)=0$. In \cite{Hefez-Hernandes-HI2017} the authors  completely determine the topological type of the generic polar curve when the multiplicity of $f(x,y)=0$ is less than five.

\medskip

The Newton polygon of $D(u,v)$ in the coordinates $(u,v)$ is called {\bf jacobian Newton polygon} of the morphism $(\ell,f)$. This notion was introduced by Teissier in \cite{Teissier}, who proved that the inclinations of this jacobian polygon are topological invariants of $(\ell,f)$ called {\bf polar invariants}. After Merle \cite{Merle}, when $f$ is irreducible with semigroup of values $S(f)=\langle s_0,s_1,\ldots,s_g\rangle$ then the jacobian Newton  polygon of $(\ell,f)$ has $g$ compact edges $\{E_i\}_{i=1}^g$. The length of the projection of $E_i$ on the vertical axis is $\left(\frac{e_{i-1}}{e_i}-1\right)\cdot \frac{e_{i-1}}{e_0}$. The length of the projection of $E_i$ on the horizontal axis is $\left(\frac{e_{i-1}}{e_i}-1\right)\cdot s_i$.  Hence the inclinations (quotient between the  length of the horizontal projection and the length of the vertical projection) of the compact edges of the jacobian polygon are $s_1<\frac{e_1}{e_0}s_2<\frac{e_2}{e_0}s_3<\cdots<\frac{e_{g-1}}{e_0}s_g$

\begin{center}
\begin{tikzpicture}[x=0.7cm,y=0.7cm]
\tikzstyle{every node}=[font=\small]
\draw[->] (0,0) -- (5,0) node[right,below] { $\;$};
\draw[->] (0,0) -- (0,4) node[above,left] {$\;$};
\draw[thick] (3.6,2) node[left] {$E_i$};
\node[draw,circle,inner sep=1pt,fill] at (1,3) {};
\node[draw,circle,inner sep=1pt,fill] at (4,1) {};
\draw[dashed] (0,3) --(1,3);
\draw[dashed] (0,1) --(4,1);
\draw[dashed] (4,0) --(4,1);
\draw[dashed] (1,0) --(1,3);
\node[draw,circle,inner sep=1pt,fill] at (4,1) {};
\draw[ultra thick] (1,3) -- (4,1);
\draw[<->] (-0.5,3) to [bend right] (-0.5,1);
\draw[-] (-2.8,2.5) node[right,below] {$\left(\frac{e_{i-1}}{e_i}-1\right)\cdot \frac{e_0}{e_{i-1}}$};
\draw[<->] (1,-0.5) to [bend right] (4,-0.5);
\draw[-] (2.7,-0.9) node[below] {$\left(\frac{e_{i-1}}{e_i}-1\right)\cdot s_i$};
\end{tikzpicture}
\end{center}

\medskip

In \cite{Hungarica} the authors study the pairs  $(\ell,f)$  for which the discriminant curve is non degenerate in  the Kouchnirenko sense. In particular, when $f$ is irreducible, after \cite[Corollary 4.4]{Hungarica}  the discriminant curve $D(u,v)=0$ is non degenerate if and only if the multiplicity of $f(x,y)=0$ equals two or equals four and genus equals two. Otherwise the discriminant curve is degenerate. Our aim in this paper will be to describe the topological type of the discriminant curve $D(u,v)=0$ of the morphism $(\ell, f),$ where $f$ is irreducible and belonging  to some special families, as for example, branches of multiplicity less than five, branches $C$ such that the difference between its {\bf Milnor number} $\mu(C)$ and {\bf Tjurina number} $\tau (C)$ is less than 3, with 
$\mu(C)=i_0\left(\frac{\partial f}{\partial x},\frac{\partial f}{\partial y}\right)=\hbox{\rm dim}_{\mathbb C}\mathbb C\{x,y\}/\left(\frac{\partial f}{\partial x},\frac{\partial f}{\partial y}\right)$ and 
$\tau (C)=\hbox{\rm dim}_{\mathbb C}\mathbb C\{x,y\}/\left(f,\frac{\partial f}{\partial x},\frac{\partial f}{\partial y}\right)$; where $\left(\frac{\partial f}{\partial x},\frac{\partial f}{\partial y}\right)$ 
(resp.$\left(\frac{\partial f}{\partial x},\frac{\partial f}{\partial y},f\right)$) denotes the ideal of $\C\{x,y\}$ generated by $\frac{\partial f}{\partial x}$ and $\frac{\partial f}{\partial y}$ (resp. by $\frac{\partial f}{\partial x}$, $\frac{\partial f}{\partial y}$ and $f$).

\medskip

For these families of plane branches we determine the topological type of their discriminant curves, in the spirit of \cite{Hefez-Hernandes-HI2017}. We prove that the topological type of the discriminant curve $D(u,v)=0$ is determined, at most, by the semigroup of values $S(f)$, the Zariski invariant and  two other analytical invariants of the curve $f(x,y)=0$. In all cases we explicitly determine such analytical invariants. 
Hence, in order to describe the topological type of the discriminant curve of a branch, it is necessary the same number of analytical invariants of the initial branch, as it happens for its generic polar curves (see \cite{Hefez-Hernandes-HI2017}).
Finally in Section \ref{tablas} we summarize the different topological types of the discriminant curve in some tables.

\section{Equation of the discriminant curve}

\noindent An analytic change of coordinates does not affect the discriminant curve of the morphism defined in  (\ref{disc-m}) (see for example \cite[Section 3]{Casas-Asian}). Hence in what follows  we assume that $\ell(x,y)=x$. Then $\frac{\partial f}{\partial y}=0$ is the polar curve of the morphism $(x,f)$. 

\medskip

\noindent In this paper we  will determine the topological type of the discriminant curve of the morphism (\ref{disc-m}) for $\ell(x,y)=x$ and $f(x,y)\in \mathbb C\{x,y\}$ irreducible belonging to some special families. The corresponding study relative to the polar curves was done
in  \cite{Hefez-Hernandes-HI2018} and \cite{tesis Fernando}, where  the authors  characterize the equisingularity classes of irreducible plane curve germs whose general members have non degenerate general polar curves. In addition, they give explicit Zariski open sets of curves
in these equisingularity classes whose general polars are non degenerate and describe their topology.

\medskip

\medskip
  
\noindent Suppose that the Newton-Puiseux factorizations of $f(x,y)$ and $\frac{\partial f}{\partial y}(x,y)$ are of the form
\begin{equation}
\label{ppp1}
f(x,y)= u_1(x,y)\prod_{i=1}^n [y-\alpha_i(x)], 
\end{equation}
\begin{equation}
  \label{ppp2}
\frac{\partial f}{\partial y}(x,y)=u_2(x,y)\prod_{j=1}^{n-1} [y-\gamma_j(x)],
\end{equation}
where 
$u_1(x,y),u_2(x,y) \in \mathbb C\{x,y\}$ are units, $n=\ord f$,  $\Zer(f)=\{\alpha_i(x)\}_i$ and $\Zer\left(\frac{\partial f}{\partial y}\right)=\{\gamma_j(x)\}_j$. If  $f$ is irreducible of order $n$ then $n$ is the smallest natural number such that $\{\alpha_i(x)\}_i\subset \mathbb C\{x^{1/n}\}$. Moreover if we fix $\alpha_i(x^{1/n})$ then $\alpha_j(x^{1/n})=\alpha_i(\omega x^{1/n})$ for any $1\leq j\leq n$, where $w$ is a $n$th-root of the unity.

\medskip

\noindent Following \cite[Lemma 5.4]{GB-G} the discriminant curve of the morphism $(x,f)$ can be written as
\begin{equation}
\label{disc-e}
D(u,v) = \prod_{j=1}^{n-1}(v-f(u,\gamma_j(u))) .
\end{equation}

\section{Discriminants of branches of small multiplicities}

\noindent In this section we  determine the topological type of the discriminant of the morphism
given in (\ref{disc-m}), where $C\equiv f(x,y)=0$ has small multiplicity.  For this we will make use  the results of \cite{Zariski} and the analytic classification of plane branches of multiplicity less than or equal to four, given in \cite{Hefez-Hernandes-2009}.

\subsection{Discriminants of branches of multiplicity $2$}
Let $C\equiv f(x,y)=0$ be  a branch of multiplicity $2$. The minimal system of generators of the semigroup of $C $ is $\langle2,s_1\rangle$, where $s_1$ is an odd natural number. By \cite[Chapitre V]{Zariski} the moduli space of branches of multiplicity two have a unique point which parametrization is given by $(t^2,t^{s_1})$, that is the branch $y^2-x^{s_1}=0$. Then $f_y(x,y)=2y$ whose Newton-Puiseux root is  $y = 0$. Hence, after (\ref{disc-e}) we have $D(u,v)=v-f(u,0)=v+u^{s_1}$, that is the discriminant curve is smooth. The Newton polygon of $D(u,v)$ has only one compact edge. The univariate polynomial associated with this edge is $z+1$, so $D(u,v)$ is non degenerate.

\subsection{Discriminants of branches of multiplicity $3$}
Let $C\equiv f(x,y)=0$ be  a branch of multiplicity $3$. The minimal system of generators of the semigroup of $C $ is $\langle3,s_1\rangle$, where $s_1\in \mathbb N$ such that $s_1\not\equiv 0$ mod $3$. By \cite[Chapitre V]{Zariski} the moduli space of branches of multiplicity three is completely determined by the semigroup of the branch and its Zariski invariant $\lambda$. The corresponding  normal forms are :\\

\[x=t^3,\;\;y=t^{s_1}+t^{\lambda},
\]

\medskip

\noindent where  $\lambda=0$ or  if $\lambda\neq 0$ we have 
\begin{equation}
\label{3}
\lambda=\left\{\begin{array}{ll} 3e+3k+4 \; &\hbox{\rm when }s_1=3e+2 \\ 3e+3k+2 \; &\hbox{\rm when }s_1=3e+1,
\end{array}
\right. \hbox{where $0\leq k\leq e-2$.}
\end{equation}

\medskip

\noindent Observe that if  $\lambda\neq 0$ then $\frac{s_1+\lambda}{2}$ is a natural number greater than $2$.

\medskip

\begin{prop}
Let $C\equiv f(x,y)=0$ be  a branch of semigroup $\langle 3,s_1\rangle$ and Zariski invariant equals $\lambda$. The discriminant curve $D(u,v)=0$ is degenerate and its topological type is determined by $(3,s_1,\lambda)$ in the next way:
\begin{enumerate}
\item If $\lambda=0$ then the discriminant is the double smooth branch $(v+u^{s_1})^2=0$.
\item If $\lambda\neq 0$ then
\begin{enumerate}
\item when $\gcd(2,s_1+\lambda)=2$, the discriminant curve is the union of two smooth branches $D_i(u,v)=0$, $1\leq i\leq 2$, with intersection number $i_0(D_1,D_2)=\frac{s_1+\lambda}{2}$.\item When $\gcd(2,s_1+\lambda)=1$, the discriminant is a branch of semigroup $\langle2,s_1+\lambda \rangle$.
\end{enumerate}
\end{enumerate}
\end{prop}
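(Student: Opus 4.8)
The plan is to exploit that the topological type of the discriminant is unchanged by analytic isomorphisms (as recalled before \eqref{disc-e}), so I may replace $C$ by its Zariski normal form while keeping $\ell(x,y)=x$, i.e. work with $x=t^3,\ y=t^{s_1}+t^{\lambda}$ (and $y=t^{s_1}$ when $\lambda=0$). Writing $t=u^{1/3}$ and $\alpha_i(u)=\omega^{is_1}u^{s_1/3}+\omega^{i\lambda}u^{\lambda/3}$, $i=0,1,2$, for the three conjugate Newton-Puiseux roots, the first step is to record the elementary symmetric functions $\sigma_1,\sigma_2,\sigma_3$ of the $\alpha_i$. Because $s_1\not\equiv 0$ and, by \eqref{3}, $\lambda\not\equiv 0 \bmod 3$, both families $\{\omega^{is_1}\}_i$ and $\{\omega^{i\lambda}\}_i$ run over all cube roots of unity, which forces $\sigma_1=\sum_i\alpha_i=0$. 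Hence $f(u,y)=y^3+\sigma_2 y-\sigma_3$ and the polar reads $\frac{\partial f}{\partial y}=3y^2+\sigma_2$, so its two Newton-Puiseux roots in \eqref{ppp2} are $\gamma_{1,2}=\pm\gamma$ with $\gamma^2=-\sigma_2/3$.

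The second step is to feed this into \eqref{disc-e}. Reducing $\gamma^3$ via $\gamma^2=-\sigma_2/3$, I expect $f(u,\pm\gamma)=-\sigma_3\pm\tfrac{2}{3}\sigma_2\gamma$, so the odd part in $\gamma$ cancels in the product and
\[
D(u,v)=\bigl(v-f(u,\gamma)\bigr)\bigl(v-f(u,-\gamma)\bigr)=(v+\sigma_3)^2+\tfrac{4}{27}\,\sigma_2^3 .
\]
This is the crucial intermediate form: it is manifestly a power series in $(u,v)$ (the possibly fractional object $\gamma$ has disappeared), and it reduces the whole problem to the two orders $\ord_u\sigma_2$, $\ord_u\sigma_3$ and their leading coefficients.

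The heart of the argument, and the step I expect to be the main obstacle, is the explicit evaluation of $\sigma_2$ and $\sigma_3$. A naive look suggests $\ord_u\sigma_2=2s_1/3$, but the candidate leading coefficient is $\sum_{i<j}(\omega^{is_1}-\omega^{js_1})^2$, which vanishes precisely because $1,\omega,\omega^2$ have vanishing first and second elementary symmetric functions; the same cancellation kills the putative $t^{2\lambda}$ term. What survives is the cross term, and here the congruences from \eqref{3} are decisive: they give $s_1+\lambda\equiv 0\bmod 3$ while $s_1\not\equiv\lambda\bmod 3$, so the cross coefficient $\sum_{i<j}(\omega^{is_1}-\omega^{js_1})(\omega^{i\lambda}-\omega^{j\lambda})$ is a nonzero constant (I expect $9$), yielding $\sigma_2=-3u^{(s_1+\lambda)/3}$ with integral exponent $(s_1+\lambda)/3$. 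An analogous check of which mixed monomials are $3$-divisible collapses $\sigma_3$ to $u^{s_1}+u^{\lambda}$, and substituting back I obtain the closed form
\[
D(u,v)=(v+u^{s_1}+u^{\lambda})^2-4u^{s_1+\lambda}.
\]

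Finally I would read off the topology. After the analytic change $w=v+u^{s_1}+u^{\lambda}$ one has $D=w^2-4u^{s_1+\lambda}$: if $s_1+\lambda$ is even this factors as $(w-2u^{(s_1+\lambda)/2})(w+2u^{(s_1+\lambda)/2})$, two smooth graphs whose contact is $\ord_u(4u^{(s_1+\lambda)/2})=\frac{s_1+\lambda}{2}$, which is case (2)(a); if $s_1+\lambda$ is odd the polynomial $w^2-4u^{s_1+\lambda}$ is irreducible with semigroup $\langle 2,s_1+\lambda\rangle$, which is case (2)(b). Degeneracy is seen directly in the $(u,v)$ coordinates: since $\lambda>s_1$, the only compact edge of $\mathcal N(D)$ joins $(0,2)$ to $(2s_1,0)$, passes through $(s_1,1)$, and its associated polynomial is $z^2+2z+1=(z+1)^2$, with a double root. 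For $\lambda=0$ the same computation degenerates to $\sigma_2=0$, $\sigma_3=u^{s_1}$, whence $D=(v+u^{s_1})^2$, giving the double smooth branch of case (1). The remaining routine points, namely that the Zariski normal form keeps $\ell=x$ transverse and that the dichotomy integral vs. half-integral for $\ord_u\gamma$ matches the parity of $s_1+\lambda$ (as $3$ is odd), I would dispatch along the way.
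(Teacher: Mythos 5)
Your proposal is correct and follows essentially the same route as the paper: reduction to the Zariski normal form, the implicit equation $f=y^3-3x^{\frac{s_1+\lambda}{3}}y-(x^{s_1}+x^{\lambda})$, the two polar roots $\pm\gamma$ with $\gamma^2=x^{\frac{s_1+\lambda}{3}}$, the resulting discriminant $D=(v+u^{s_1}+u^{\lambda})^2-4u^{s_1+\lambda}$ (the paper writes the same thing in factored form), the parity dichotomy for $s_1+\lambda$ via Halphen--Zeuthen, and degeneracy read off from the face polynomial $(z+1)^2$ of the single compact edge of ${\mathcal N}(D)$. The only blemish is cosmetic: the quantity $\sum_{i<j}(\omega^{is_1}-\omega^{js_1})(\omega^{i\lambda}-\omega^{j\lambda})$ you display (which does equal $9$) is not the cross coefficient of $\sigma_2$ --- that coefficient is $\sum_{i\neq j}\omega^{is_1+j\lambda}=-3$ --- but the value $\sigma_2=-3u^{(s_1+\lambda)/3}$ you actually use is the correct one, matching the paper's implicit equation, so nothing downstream is affected.
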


\noindent \begin{proof}
\noindent Suppose $\lambda=0$. The implicit equation of the normal form is $f(x,y)=y^3-x^{s_1}=0$. Then $f_y(x,y)=3y^2$ whose Newton-Puiseux root is $y = 0$, with multiplicity two. Hence, after (\ref{disc-e}) we have $D(u,v)=(v-f(u,0))^2=(v+u^{s_1})^2$, that is the discriminant is a double smooth branch.

\medskip

Suppose now $\lambda\neq 0$. The normal forms are $x=t^3,\,\,y=t^{s_1} + t^{\lambda},$ with $\lambda$ as in (\ref{3}). After (\ref{ecuaci—n}) the  implicit equation  is $f(x,y)=y^3-3x^{\frac{s_1+\lambda}{3}}y-(x^{s_1} + x^{\lambda})$. Then $f_y(x,y)=3y^2-3x^{\frac{s_1+\lambda}{3}}$, whose Newton-Puiseux roots are $\gamma_i(x)=\pm x^{\frac{s_1+\lambda}{6}}$ for $1\leq i\leq 2$. Using (\ref{disc-e}) we have $D(u,v)=\left(v+u^{s_1}-2u^{\frac{s_1+\lambda}{2}}+u^{\lambda}\right)\left(v+u^{s_1}+2u^{\frac{s_1+\lambda}{2}}+u^{\lambda}\right) $. Hence $\Zer(D)=\{\eta_1:=-u^{s_1}-2u^{\frac{s_1+\lambda}{2}}-u^{\lambda},\eta_2:=-u^{s_1}+2u^{\frac{s_1+\lambda}{2}}-u^{\lambda}\}$ and 
$\ord(\eta_1-\eta_2)=\frac{s_1+\lambda}{2}$.
We distinguish two cases: if $\gcd(2,s_1+\lambda)=2$ then by (\ref{lisas}) we conclude that $D(u,v)=0$ has two smooth branches of equations $D_1(u,v):=v+u^{s_1}-2u^{\frac{s_1+\lambda}{2}}+u^{\lambda}$ and $D_2(u,v)=v+u^{s_1}+2u^{\frac{s_1+\lambda}{2}}+u^{\lambda}$ such that, after Halphen-Zeuthen formula, the intersection multiplicity  is $i_0(D_1,D_2)=\frac{s_1+\lambda}{2}$. If $\gcd(2,s_1+\lambda)=1$ then the discriminant curve $D(u,v)=0$ is a singular branch of semigroup $\langle 2,s_1+\lambda \rangle$.

\medskip

The Newton polygon of $D(u,v)$ is elementary (it has only one compact edge) and the univariate polynomial associated with its compact edge is $(z+1)^2$. Hence the discriminant $D(u,v)=0$ is degenerate.
\end{proof}

\medskip

\begin{coro}
\label{dred}
If $C$ is a branch of multiplicity $2$ or $3$ and non-zero Zariski invariant then the discriminant curve $D(u,v)=0$ has not multiple irreducible branches.
\end{coro}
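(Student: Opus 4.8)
The plan is to read the conclusion straight off the explicit computations already carried out, organising the argument by the multiplicity of $C$ and checking that the reducedness of the discriminant is precisely what the hypothesis $\lambda\neq 0$ buys us. No new machinery is needed; the content is to verify that the one scenario producing a repeated component is exactly the one the hypothesis excludes.

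First I would dispose of the case $m(C)=2$. In the subsection on multiplicity two it was shown that $D(u,v)=v+u^{s_1}$, which defines a smooth curve, and a smooth curve is a single reduced (irreducible, non-multiple) branch. Here the assertion is automatic; indeed, as noted there, a branch of multiplicity two has a one-point moduli space with parametrisation $(t^2,t^{s_1})$, so it carries no nonzero Zariski invariant and nothing further must be checked.

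Next, for $m(C)=3$ with $\lambda\neq 0$, I would invoke case (2) of the preceding Proposition. There the discriminant was found to have the two Newton–Puiseux roots $\eta_1,\eta_2$ with $\ord(\eta_1-\eta_2)=\frac{s_1+\lambda}{2}<\infty$; since this order of contact is finite, $\eta_1\neq\eta_2$ and the two linear factors in the factorisation of $D(u,v)$ are distinct. In subcase (2a), where $\gcd(2,s_1+\lambda)=2$, this gives two distinct smooth branches $D_1\neq D_2$, hence no repeated component. In subcase (2b), where $\gcd(2,s_1+\lambda)=1$, the two roots are conjugate and $D(u,v)=0$ is a single irreducible branch with semigroup $\langle 2,s_1+\lambda\rangle$, which is reduced. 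In either subcase $D(u,v)=0$ has no multiple irreducible branch.

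The one point deserving explicit mention — and it is what isolates the hypothesis — is to observe that a multiple component arises for multiplicity three precisely when $\lambda=0$: there the Proposition yields $D(u,v)=(v+u^{s_1})^2$, a double smooth branch. Thus forbidding $\lambda=0$ removes the unique source of a repeated component, and I expect no genuine obstacle beyond this bookkeeping of the cases of the Proposition.
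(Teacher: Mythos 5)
Your proposal is correct and follows exactly the route the paper intends: the corollary is stated without a separate proof because it is read off directly from the multiplicity-two computation ($D=v+u^{s_1}$ smooth) and from case (2) of the preceding Proposition, where $\ord(\eta_1-\eta_2)=\frac{s_1+\lambda}{2}<\infty$ forces the two Newton--Puiseux roots, hence the components, to be distinct (or to merge into a single reduced branch when $\gcd(2,s_1+\lambda)=1$). Your closing observation that the only repeated component occurs precisely at $\lambda=0$ matches the paper's implicit reasoning, so there is nothing to add.
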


Corollary 3.2 does not hold for branches of multiplicity $4$ as the proof of Proposition \ref{sigma5} shows.

\subsection{Discriminants of branches of multiplicity $4$}
\label{multiplicidad 4}
Let $C\equiv f(x,y)=0$ be  a branch of multiplicity $4$.  The branch $C$ may have genus one or two. 

\begin{prop}
\label{4NG}
Let $C\equiv f(x,y)=0$ be  a branch of semigroup $\langle 4,s_1, s_2\rangle$ . Then the discriminant curve $D(u,v)=0$ is non-degenerate. Moreover $D(u,v)=D_1(u,v)D_2(u,v)$, where $D_1$ is a smooth branch, $D_2$ is a singular branch of semigroup $\langle 2, s_2\rangle$and the intersection multiplicity between both branches is
$i_0(D_1,D_2)=2s_1$.
\end{prop}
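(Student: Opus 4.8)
The plan is to proceed as in the previous subsections from the factorisation (\ref{disc-e}) of the discriminant, but to replace the explicit computation of the Newton--Puiseux roots of $f_y$ (unavailable here, since this equisingularity class carries moduli) by the purely topological data of the jacobian Newton polygon. First I would record the arithmetic forced by the semigroup $\langle 4,s_1,s_2\rangle$: since the genus is $2$ one has $e_1=\gcd(4,s_1)=2$, so $s_1$ is even; then $e_2=\gcd(2,s_2)=1$ forces $s_2$ odd; and the semigroup inequality $e_0s_1<e_1s_2$ reads $2s_1<s_2$, i.e. $s_1<\tfrac{s_2}{2}$. Writing $\eta_j:=f(u,\gamma_j(u))$ for the roots of $D(u,v)$ viewed as a monic polynomial of degree $3$ in $v$ over $\mathbb C\{u\}$, I read off from Merle's description (\cite{Merle}) of the jacobian Newton polygon, whose two compact edges $E_1,E_2$ have inclinations $s_1$ and $\tfrac{s_2}{2}$ and vertical projections $1$ and $2$ respectively, that exactly one $\eta_j$ has $u$-order $s_1$ and the other two have $u$-order $\tfrac{s_2}{2}$.

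The factorisation $D=D_1D_2$ then drops out of elementary Galois theory over $\mathbb C\{u\}$. The root of order $s_1\in\mathbb N$ is alone in its order-class, so its Galois orbit is a singleton, it lies in $\mathbb C\{u\}$, and it yields a smooth factor $D_1=v-\eta_1$. The two roots of order $\tfrac{s_2}{2}\notin\mathbb N$ cannot lie in $\mathbb C\{u\}$, so each has orbit of size at least $2$; being only two, they form a single orbit, conjugate under $u^{1/2}\mapsto -u^{1/2}$, and give an irreducible factor $D_2=(v-\eta_2)(v-\eta_3)$ of multiplicity $2$. Substituting $u=\tau^2$ turns $\eta_2$ into a series of order $s_2$ in $\tau$ with nonzero leading coefficient, so $D_2$ admits the parametrisation $(\tau^2,\ c\,\tau^{s_2}+\cdots)$ with $\gcd(2,s_2)=1$; hence $D_2$ is a branch of semigroup $\langle 2,s_2\rangle$, as claimed. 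For the intersection multiplicity I would apply the Halphen--Zeuthen formula (\ref{Halphen}) to $D_1$ and $D_2$, obtaining $\operatorname{ord}_u(\eta_1-\eta_2)+\operatorname{ord}_u(\eta_1-\eta_3)$; since $\operatorname{ord}_u\eta_1=s_1<\tfrac{s_2}{2}=\operatorname{ord}_u\eta_2=\operatorname{ord}_u\eta_3$, each difference has order $s_1$, giving $i_0(D_1,D_2)=2s_1$.

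For non-degeneracy I would compute the Newton polygon of $D$ in the coordinates $(u,v)$ as the Minkowski sum of those of the factors: $\mathcal N(D_1)$ is the segment $(0,1)$--$(s_1,0)$ and $\mathcal N(D_2)$ the segment $(0,2)$--$(s_2,0)$, whose inclinations $s_1<\tfrac{s_2}{2}$ produce the two edges of $\mathcal N(D)$ with vertices $(0,3),(s_1,2),(s_1+s_2,0)$. The edge coming from $D_1$ carries a degree-one univariate polynomial, while the edge coming from $D_2$ carries the quadratic associated with the segment $(0,2)$--$(s_2,0)$; because $s_2$ is odd this segment has no interior lattice point, so that quadratic has the shape $az^2+b$ with $a,b\neq 0$, hence simple roots. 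Both face polynomials are therefore square-free and $D$ is non-degenerate.

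The step I expect to be the main obstacle is precisely this last one. One must verify that the coefficient of $v$ in $D$ does not reach the edge $E_2$, equivalently that the order-$\tfrac{s_2}{2}$ terms of $\eta_2$ and $\eta_3$ cancel in $\eta_2+\eta_3$ so that $\operatorname{ord}_u(\eta_2+\eta_3)>\tfrac{s_2}{2}$. This cancellation is exactly what the parity of $s_2$ guarantees, and it is what singles out the multiplicity-$4$, genus-$2$ case as the non-degenerate exception highlighted in \cite{Hungarica}. An alternative, more computational route would start from the normal form of the branch given by the analytic classification \cite{Hefez-Hernandes-2009} and compute the roots of $f_y$ directly; there the obstacle shifts to showing that the surviving moduli parameters do not alter the topological type of $D$, which the structural argument above avoids.
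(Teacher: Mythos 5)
Your proposal is correct, but it runs in the opposite logical direction from the paper's proof, so the two are genuinely different. The paper first quotes the second part of \cite[Corollary 4.4]{Hungarica} to get non-degeneracy of $D(u,v)=0$ for free, then invokes the fact that the topological type of a non-degenerate curve is determined by its Newton polygon (\cite[Proposition 4.7]{Oka}, \cite[Theorem 3.2]{GB-L-P2007}), and finally reads the decomposition $D=D_1D_2$, the semigroup $\langle 2,s_2\rangle$ and (via Halphen--Zeuthen) the intersection number $2s_1$ off the jacobian Newton polygon of Figure \ref{JNP}, using the same arithmetic observations you make ($s_2$ odd, $s_2>2s_1$). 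You instead derive the branch structure directly: root orders from Merle's polygon via Chenciner's lemma, then the Galois-orbit argument showing the integer-order root is fixed (smooth factor $D_1$) while the two roots of order $\tfrac{s_2}{2}$ form a single orbit (irreducible factor $D_2$ with semigroup $\langle 2,s_2\rangle$), then Halphen--Zeuthen for $i_0(D_1,D_2)=2s_1$; only afterwards do you prove non-degeneracy, by hand, from the Minkowski-sum decomposition of $\mathcal N(D)$ and the observation that oddness of $s_2$ leaves no interior lattice point on the second edge, so its face polynomial is $az^2+b$ with simple roots. What your route buys is self-containedness: you never need the Hungarica citation, and your last step in effect reproves the relevant direction of that corollary in this case, pinpointing the exact mechanism (cancellation of the order-$\tfrac{s_2}{2}$ terms in $\eta_2+\eta_3$ forced by conjugation and parity). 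What the paper's route buys is brevity: once non-degeneracy is quoted, the ``topology from the polygon'' theorem does all the branch identification in one stroke, with no Galois or face-polynomial analysis. Both arguments rest on Merle's description of the jacobian polygon and finish with the same Halphen--Zeuthen computation, so your proof is a legitimate, slightly longer but more transparent, alternative.
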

\noindent \begin{proof}
\noindent 
For genus two, and after the second part of \cite[Corollary 4.4]{Hungarica} we get that $D(u,v)=0$ is non degenerate and we can determine its topological type from its Newton polygon (see \cite[Proposition 4.7]{Oka} and \cite[Theorem 3.2]{GB-L-P2007}), which is the jacobian Newton polygon of $(x,f)$ (see Figure \ref{JNP}).

\begin{figure}[h]
\begin{center}
\begin{tikzpicture}[x=0.7cm,y=0.7cm]
\tikzstyle{every node}=[font=\small]
\draw[->] (0,0) -- (15,0) node[right,below] {$i$};
\draw[->] (0,0) -- (0,3) node[above,left] {$j$};
\draw[thick] (0,1.5) node[left] {$3$};
\node[draw,circle,inner sep=1.2pt,fill] at (0,1.5) {};
\draw[thick] (0,1) node[left] {$2$};
\node[draw,circle,inner sep=1.2pt,fill] at (13.5,0) {};
\node[draw,circle,inner sep=1.2pt,fill] at (3,1) {};
\draw[thick] (0,1.5) --(3,1);
\draw[thick] (3,1) --(13.5,0);
\draw[dashed] (3,1) --(0,1);
\draw[dashed] (3,1) --(3,0) node[below] {$s_1$};
\draw (13.5,0) node[below]  {$s_1+s_2$};
\end{tikzpicture}
\end{center}
\caption{Jacobian Newton polygon of a branch with semigroup $\langle 4,s_1, s_2\rangle$.}\label{JNP}
\end{figure}
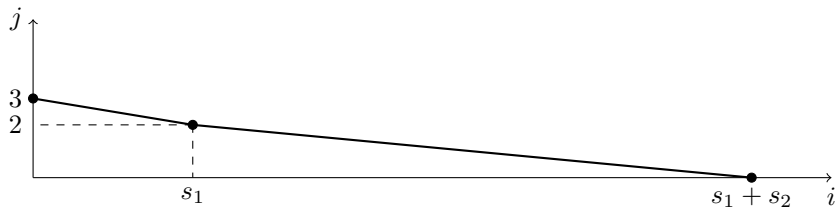 

\medskip

\noindent Since $s_2$ is an odd natural number then  $\gcd(s_1+s_2,2)=1$. Moreover, after the properties of $S(f)$,  we get $e_0s_1=4s_1<2s_2=e_1s_2$, hence $s_2>2s_1$ and $D(u,v)=D_1(u,v)D_2(u,v)$, where $D_1(u,v)=0$ is a smooth curve admitting as parametrization $(t,t^{s_1}+\cdots)$ and $D_2(u,v)=0$ is a singular curve of genus $1$ and semigroup of values equals $\langle 2, s_2\rangle$. Finally, after Halphen-Zeuthen formula, the intersection multiplicity between both branches is
$i_0(D_1,D_2)=2s_1$.
\end{proof}
\medskip

\noindent Suppose now that the branch $C$ has genus $1$ and semigroup   of values equals $\langle 4, s_1\rangle$. By \cite{Hefez-Hernandes-2009}  the moduli space of branches of multiplicity four and genus 1 has five  families of normal forms:

\begin{equation}
\label{sigmas}
\hbox{\rm NF 4.$i$:}\;\;x=t^4,\;\;y_i=\sigma_i(t), \;\;\;\; 1\leq i\leq 5,
\end{equation}

where    $\lambda_i$ is the Zariski invariant of the $i$th-normal form family. More precisely we have

\begin{enumerate}
\item $(\lambda_1,\sigma_1(t))=(0,t^{s_1})$, 
\item If $2\leq i \leq 4$ then $\lambda_i=2s_1-4j$ for $2\leq j \leq [\frac{{s_1}}{4}]$ and

  \begin{eqnarray*}\;\;\;\;\;\;\;\;\;\;\;\; \sigma_2(t)&=&t^{s_1}+t^{\lambda_2}+a_kt^{3{s_1}-(4[\frac{{s_1}}{4}]+j+1-k)}+ \cdots + a_{j-[\frac{{s_1}}{4}]-2}t^{3{s_1}-4([\frac{{s_1}}{4}]+3-k)},\end{eqnarray*}
 with $a_k \neq 0,\,$ and $1\leq k \leq [\frac{{s_1}}{4}]-j$.
				
 \begin{eqnarray*}\sigma_3(t)&=&t^{s_1}+t^{\lambda_3}+	\frac{3{s_1}-4j}{2{s_1}}t^{3{s_1}-8j}+a_{[\frac{{s_1}}{4}]-j+2} t^{3{s_1}-4(2j-1)}+\cdots\\ &+& a_{[\frac{{s_1}}{4}]}t^{3{s_1}-(j+1)}. \end{eqnarray*}

  \begin{eqnarray*}\sigma_4(t)&=&t^{s_1}+t^{\lambda_4}+a_{[\frac{{s_1}}{4}]-j+1}t^{3{s_1}-8j}+a_{[\frac{{s_1}}{4}]-j+2}t^{3{s_1}-4(2j-1)}+ \cdots \\
  &+& a_{[\frac{{s_1}}{4}]-1}t^{3{s_1}-4(j+2)}, \end{eqnarray*}
  where $a_{[\frac{{s_1}}{4}]-j+1} \neq \frac{3{s_1}-4j}{2{s_1}}$.

\item  If $i=5$ then $\lambda_5=3s_1-4j$ for $2\leq j \leq [\frac{{s_1}}{2}]$ and 

\begin{eqnarray*} \sigma_5(t)&=&t^{s_1}+t^{3s_1-4j}+a_kt^{2s_1-4(j-[\frac{s_1}{4}]-k)} + a_{k+s}t^{2s_1-4(j-[\frac{s_1}{4}] k-s)}\\
&+&\cdots  
\end{eqnarray*}

			\end{enumerate}
After the Newton-Puiseux Theorem, the $i$th-normal form admits the equation 
\begin{equation}\label{eq:f}
f_i(x,y)=\prod_{\omega^4=1}(y-\sigma_i(\omega x^{1/4})),
\end{equation}
where $\omega$ is a $4$th primitive root of the unity.

\noindent Hence, we obtain the implicit equation for each normal form family: 
\begin{enumerate}
\item [\hbox{\rm NF 4.$1$:}] $f_1(x,y)=y^4-x^{s_1}$.

\item [\hbox{\rm NF 4.$i$:}]  For $2\leq i \leq 4$ we get 
\begin{equation}
\label{normal form 4}
f_i(x,y)=y^4+P_i(x)y^2+Q_i(x)y+x^{s_1}u(x),
\end{equation}

\noindent where $u(x)\in \mathbb C\{x\}$ is a unit (that is $u(0)\neq 0$), $\lambda_i=2s_1-4j$ for $2\leq j \leq [\frac{{s_1}}{4}]$, $Q_i(x)=-4x^{s_1-j}+\cdots$ and

\[
P_i(x)=\left \{\begin{array}{ll}
-4a_kx^{s_1-\left(\left[\frac{s_1}{4}\right]+j+1-k\right)}+\cdots&\hbox{\rm for } i=2\\
&\\
bx^{s_1-2j}+\cdots &\hbox{\rm for } i=3 \; \hbox{\rm where }b=\frac{-8(s_1-j)}{s_1}.\\
&\\
(-2-4a_{\left[\frac{s_1}{4}\right]-j+1})x^{s_1-2j}+\cdots& \hbox{\rm for } i=4 \; \hbox{\rm and }-2-4a_{\left[\frac{s_1}{4}\right]-j+1} \neq 0\\
&\\
0& \hbox{\rm for } i=4, \;\;-2-4a_{\left[\frac{s_1}{4}\right]-j+1}=0\\
\; & \hbox{\rm and } a_{\left[\frac{s_1}{4}\right]-j+2}= \cdots =a_{\left[\frac{s_1}{4}\right]-1}=0 \\
&\\
-4a_{\left[\frac{s_1}{4}\right]-j+k_0}x^{s_1-(2j-(k_0-1))}+\cdots& \hbox{\rm for } i=4, \;\;-2-4a_{\left[\frac{s_1}{4}\right]-j+1}=0\\
\; &\hbox{\rm and }a_l\neq 0 \; \hbox{\rm for  some }l=\left[\frac{s_1}{4}\right]-j+2,\ldots, \left[\frac{s_1}{4}\right]-1
\end{array}
\right .
\]

\noindent where $ 2\leq k_0 \leq j-1$ such that 
\[
k:= \min \left\{l\;:\;a_l \neq 0 \,\,, \,\,  \left[\frac{s_1}{4}\right]-j+2 \leq l <  \left[\frac{s_1}{4}\right]\right\}=\left[\frac{s_1}{4}\right]-j+k_0.
\]

Observe that 
\begin{equation}
\label{orden P}\ord P_i(x)\geq \frac{2}{3}(s_1-j).
\end{equation}

\item [\hbox{\rm NF 4.$5$:}] $f_5(x,y)=y^4+P(x)y^2+Q(x)y+R(x)$, where

\begin{eqnarray}
\label{P5}
P(x)&=&-4x^{s_1-j}-2a_{k}^2x^{s_1-2(j-[\frac{s_1}{4}]-k)}-4a_ka_{k+s}x^{s_1-2(j-s_1-[\frac{s_1}{4}]-k)+s} \nonumber\\ 
&-&
2a_{k+s}^2 x^{s_1-2(j-[\frac{s_1}{4}]-k-s )}+\cdots
\end{eqnarray}

\begin{eqnarray}
\label{Q5}
\;\;\;\;\;Q(x)&=&-4a_kx^{s_1-j+[\frac{s_1}{4}]+k}-4a_{k+s}x^{s_1-j
	-[\frac{s_1}{4}]+k+s} \nonumber\\ 
&-&4a_kx^{2(s_1-j)-(j-[\frac{s_1}{4}]-k)} - 4a_{k+s}x^{2(s_1-j)-(j-[\frac{s_1}{4}]-k-s)}+\cdots)\;\;\;\;
\end{eqnarray}

\begin{eqnarray}
\label{R5}
R(x)&=&-x^{s_1}+2x^{2(s_1-j)} -x^{3s_1-4j}+a_kx^{2s_1-4(j-[\frac{s_1}{4}]-k)}+\cdots\;\;\;\;
\end{eqnarray}
\medskip
\end{enumerate}	

for $2\leq j \leq [\frac{{s_1}}{2}]$.

\begin{prop}
\label{1-4}
Let $C\equiv f(x,y)=0$ be  a branch belonging to the family NF 4.$i$, for $1\leq i\leq 4$. Then the discriminant curve $D(u,v)=0$ is degenerate and its topological type is determined by the semigroup $S(f)=\langle 4,s_1\rangle$ and the Zariski invariant $\lambda_i$ of $C$. Moreover

\begin{enumerate}
\item If $\lambda_i=0$ then the discriminant curve is the triple smooth branch $(v+u^{s_1})^3=0$.
\item If $\lambda_i\neq 0$ then $\lambda_i=2{s_1}-4j$  for $2\leq j \leq [\frac{{s_1}}{4}]$ and 
\begin{enumerate}
\item when $\gcd(3,2s_1+\lambda_i)=1$ the discriminant curve $D(u,v)=0$ is a branch of semigroup $\langle 3,2s_1+\lambda_i\rangle$;
\item when $\gcd(3,2s_1+\lambda_i)=3$ the discriminant curve is the union of three smooth branches $D_i(u,v)=0$, $1\leq i\leq 3$, with intersection number  $i_0(D_l,D_r)=\frac{2s_1+\lambda_i}{3}  $ for $l\neq r$.
\end{enumerate}
\end{enumerate}
\end{prop}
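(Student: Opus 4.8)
The plan is to compute $D(u,v)$ directly from the factorisation (\ref{disc-e}) in terms of the Newton--Puiseux roots $\gamma_1,\gamma_2,\gamma_3$ of $f_y$, exactly as in the multiplicity $2$ and $3$ cases, and then to read off the topological type from the orders of contact of the resulting roots of $D$ via (\ref{lisas}) and (\ref{Halphen}). I would first dispose of the case $\lambda_i=0$, which is the family NF 4.1 with $f_1=y^4-x^{s_1}$. There $f_y=4y^3$ has the single root $y=0$ of multiplicity three, so (\ref{disc-e}) gives $D(u,v)=(v-f(u,0))^3=(v+u^{s_1})^3$. The Newton polygon of $D$ is elementary and the univariate polynomial attached to its compact edge is $(z+1)^3$, which has a multiple root; hence $D$ is degenerate and is the triple smooth branch, proving (1).

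Assuming next $\lambda_i\neq 0$, so $\lambda_i=2s_1-4j$ with $2\le j\le[s_1/4]$, I would start from the implicit equation (\ref{normal form 4}), namely $f_i=y^4+P_i(x)y^2+Q_i(x)y+x^{s_1}u(x)$ with $Q_i=-4x^{s_1-j}+\cdots$. Since $C$ has genus one and semigroup $\langle 4,s_1\rangle$ we have $\gcd(4,s_1)=1$, so $s_1$ is odd and $j\le[s_1/4]<s_1/4$; this strict inequality is what drives the whole computation. Combined with (\ref{orden P}), $\ord P_i\ge\tfrac{2}{3}(s_1-j)$, I would check case by case that in fact $\ord P_i>\tfrac{2}{3}(s_1-j)$ for each of NF 4.2, 4.3 and 4.4. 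Consequently the term $2P_iy$ is negligible at lowest order in $f_y=4y^3+2P_iy+Q_i$, the dominant balance is $4y^3-4x^{s_1-j}$, and the three roots are $\gamma_k=\omega_3^{\,k}x^{(s_1-j)/3}(1+\cdots)$ with $\omega_3$ a primitive cube root of unity, all of order $(s_1-j)/3$.

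The key simplification I would then use is to substitute $y=\gamma_k$ into $f_i$ and invoke $f_y(\gamma_k)=0$, i.e.\ $Q_i\gamma_k=-4\gamma_k^4-2P_i\gamma_k^2$, which yields
\[
f_i(u,\gamma_k)=-3\gamma_k^4-P_i\gamma_k^2+u^{s_1}u(u).
\]
Thus $\Zer(D)=\{\eta_k:=u^{s_1}u(u)-3\gamma_k^4-P_i\gamma_k^2\}_{k=1}^{3}$ and $\eta_k-\eta_l=-3(\gamma_k^4-\gamma_l^4)-P_i(\gamma_k^2-\gamma_l^2)$. Here $\ord(\gamma_k^4-\gamma_l^4)=\tfrac{4(s_1-j)}{3}$ with nonzero leading coefficient $\omega_3^{k}-\omega_3^{l}$, while $\ord\big(P_i(\gamma_k^2-\gamma_l^2)\big)=\ord P_i+\tfrac{2(s_1-j)}{3}>\tfrac{4(s_1-j)}{3}$, whence
\[
\ord(\eta_k-\eta_l)=\tfrac{4(s_1-j)}{3}=\tfrac{2s_1+\lambda_i}{3},
\]
using $2s_1+\lambda_i=4(s_1-j)$. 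If $\gcd(3,2s_1+\lambda_i)=1$ this order is not an integer, the $\eta_k$ (being $f_i$ evaluated at a conjugacy class of $\gamma_k$) form a single conjugacy class, and by (\ref{lisas}) $D$ is an irreducible branch of multiplicity $3$ with characteristic exponent $2s_1+\lambda_i$, i.e.\ of semigroup $\langle 3,2s_1+\lambda_i\rangle$, giving (2)(a). If $\gcd(3,2s_1+\lambda_i)=3$ the order is an integer and the leading coefficients $\omega_3^{\,k}$ are distinct, so the $\eta_k$ are three distinct power series; then $D$ splits into three smooth branches $v=\eta_k(u)$ whose pairwise intersection number equals $\tfrac{2s_1+\lambda_i}{3}$ by (\ref{Halphen}), giving (2)(b). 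In both cases the type depends only on $s_1$ and $\lambda_i$, as asserted.

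Degeneracy when $\lambda_i\neq 0$ follows from the same estimates: since $j<s_1/4$ we get $\ord\gamma_k^4=\tfrac{4(s_1-j)}{3}>s_1$ and likewise $\ord(P_i\gamma_k^2)>s_1$, so all three roots share the common leading term $u(0)u^{s_1}$; hence the Newton polygon of $D$ is elementary and the univariate polynomial of its compact edge is $(z-u(0))^3$, with a multiple root, so $D$ is degenerate. The main work, and the place where care is needed, is the uniform verification of the strict bound $\ord P_i>\tfrac{2}{3}(s_1-j)$ across the three families NF 4.2, 4.3, 4.4, since this strictness simultaneously makes the $\gamma_k^4$-term dominate (fixing the contact order of the roots of $D$) and forces the common leading term $u(0)u^{s_1}$ (giving degeneracy). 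The potential boundary $j=s_1/4$, where the $P_i$-term would tie with the $\gamma_k^4$-term and could a priori split the leading coefficients of the $\eta_k$, is excluded precisely because $s_1$ is odd.
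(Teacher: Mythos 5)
Your proof is correct and follows essentially the same route as the paper's: compute the Newton polygon of $(f_i)_y$ from (\ref{normal form 4}) and (\ref{orden P}), obtain the three roots $\gamma_k$ of order $\frac{s_1-j}{3}$ with cube-root-of-unity leading coefficients, substitute into $f_i$ via (\ref{disc-e}) to get pairwise contact order $\frac{2s_1+\lambda_i}{3}$ among the roots of $D$, and conclude by (\ref{lisas}) and Halphen--Zeuthen, with degeneracy read off from the elementary Newton polygon whose edge polynomial is a cube. Your two refinements --- verifying the \emph{strict} inequality $\ord P_i>\frac{2}{3}(s_1-j)$ (the paper records only $\geq$ in (\ref{orden P}) but implicitly uses strictness when taking the face polynomial to be $4z^3-4$) and exploiting $(f_i)_y(\gamma_k)=0$ to simplify $f_i(u,\gamma_k)$ --- are added care within the same method, not a different approach.
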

\noindent \begin{proof}
\noindent Suppose $\lambda_i=0$. By (\ref{eq:f}) the implicit equation of the normal form is  $f_1(x,y)=y^4-x^{s_1}$.  Then $(f_1)_y(x,y)=4y^3$ whose Newton-Puiseux root is $y = 0$, with multiplicity three. Hence, after (\ref{disc-e}) we have $D(u,v)=(v-f_1(u,0))^3=(v+u^{s_1})^3$, that is the discriminant curve is a triple smooth branch.

\smallskip

\noindent  Suppose now $\lambda_i\neq 0$. Then $\lambda_i=2{s_1}-4j$ for $2\leq j \leq [\frac{{s_1}}{4}]$  and the normal form of $C$ is 
$x=t^4,\;\;y= \sigma_i (t)$ for $2\leq i\leq 4$. 
\medskip

\noindent  From  the implicit equations $f_i(x,y)$, $2\leq i \leq 4$, given in (\ref{normal form 4}) and from the inequality (\ref{orden P}) we get that the Newton polygon of $ (f_i)_y(x,y)$ has only one compact edge whose vertices are $(0,3)$ and $(s_1-j,0)$. All the parametrisations of $ (f_i)_y(x,y)=0$, for $2 \leq i \leq 4$, have the same  order and we can write them by 
$\gamma_r(u)=\varepsilon u^{\frac{s_1-j}{3}}+\cdots,$ where $\varepsilon$ is a $3$th-root of the unity. From  $(\ref{disc-e})$ and for a fix $i\in \{2,3,4\}$, we have

\[D(u,v)=\prod_{(f_i)_y(\gamma_r)=0}(v-f_i(u,\gamma_r(u))),\]

and considering the development of $f_i(u,\gamma_r(u))$ we obtain:
\[
D(u,v)=\prod_{\varepsilon^3=1}(v+u^{s_1}+3\varepsilon u^{4\left(\frac{s_1-j}{3}\right)}+\cdots)=\prod_{\varepsilon^3=1}(v+u^{s_1}+3\varepsilon u^{\left(\frac{2s_1+\lambda_i}{3}\right)}+\cdots),
\]

so $\Zer(D)=\{\eta_i(\varepsilon):=-u^{s_1}-3\varepsilon u^{\left(\frac{2s_1+\lambda_i}{3}\right)}+\cdots\}_{\varepsilon^3=1}$ and 
$\ord(\eta_l-\eta_r)=\frac{2s_1+\lambda_i}{3}$, for $1\leq l\neq r\leq 3$. The topological type of  $D(u,v)=0$
 is determined by the semigroup of the branch $C\equiv f_i(x,y)=0$ and its  Zariski analytical invariant $\lambda_i = 2s_1-4j$  for $2\leq j \leq [\frac{{s_1}}{4}]$. We distinguish two cases:
if $3$ and $2s_1+\lambda_i$ are coprime then the discriminant $D(u,v)=0$ is a branch of semigroup $\langle 3,2s_1+\lambda_i\rangle$. Otherwise, by (\ref{lisas}), we conclude that the discriminant curve is the union of three different smooth branches $D_l(u,v)=0$ with intersection multiplicity $i_0(D_l,D_r)=\frac{2s_1+\lambda_i}{3}$ for $l\neq r$.

\medskip

\noindent In all cases the Newton polygon of $D(u,v)$ is elementary with vertices $(0,3)$ and $(3s_1,0)$. The polynomial associated with its compact edge is $(z+1)^3$, so the discriminant curve $D(u,v)=0$ is degenerate.
\end{proof}

\begin{prop}
\label{sigma5}
Let $C\equiv f(x,y)=0$ be  a branch belonging to the family NF 4.$5$. Then the discriminant curve $D(u,v)=0$ is degenerate and its topological type is determined by the semigroup $S(f)=\langle 4,s_1\rangle$, the Zariski invariant $\lambda_5$ and at most two other analytical invariants of $C$.

\end{prop}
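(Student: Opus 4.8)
The plan is to reproduce, for the family NF 4.$5$, the strategy of the proof of Proposition~\ref{1-4}, the new feature being that the polar curve $\frac{\partial f_5}{\partial y}=0$ no longer has a one-edged Newton polygon, and it is precisely this extra structure that forces the two additional analytical invariants. First I would write
\[
\frac{\partial f_5}{\partial y}=4y^3+2P(x)y+Q(x),
\]
and read off from (\ref{P5}) and (\ref{Q5}) the orders $\ord P=s_1-j$ and $\ord Q$. Contrary to the families NF 4.$i$ with $2\le i\le 4$, where $\ord Q=s_1-j$ and ${\mathcal N}\!\left(\frac{\partial f_i}{\partial y}\right)$ was the single edge joining $(0,3)$ to $(s_1-j,0)$, here $Q$ has no monomial $x^{s_1-j}$ and its order is governed by the positions $k$ and $k+s$ of the first nonzero coefficients $a_k,a_{k+s}$ of the normal form. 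Computing $\ord Q$ in terms of $s_1,j,k,s$ fixes ${\mathcal N}\!\left(\frac{\partial f_5}{\partial y}\right)$ and hence the orders of the three Newton-Puiseux roots $\gamma_1,\gamma_2,\gamma_3$; the representative situation is that $(s_1-j,1)$ becomes a vertex, so that this polygon has two compact edges and $\frac{\partial f_5}{\partial y}=0$ has two conjugate roots of order $\frac{s_1-j}{2}$ together with a third root of order $\ord Q-(s_1-j)$.

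Next I would substitute each $\gamma_r(u)$ into $f_5(u,y)=y^4+P(u)y^2+Q(u)y+R(u)$. By Merle's description of the jacobian Newton polygon recalled in the Introduction, for a branch of semigroup $\langle 4,s_1\rangle$ the polygon ${\mathcal N}(D)$ is the single edge from $(0,3)$ to $(3s_1,0)$, so $\sum_{r=1}^{3}\ord\eta_r=3s_1$ where $\eta_r:=f_5(u,\gamma_r(u))$; using the expansion $R(u)=-u^{s_1}+2u^{2(s_1-j)}-u^{3s_1-4j}+\cdots$ from (\ref{R5}) and $2(s_1-j)>s_1$ (as $j\le\left[\frac{s_1}{2}\right]$), I would check that each $\eta_r$ has order $s_1$ and leading coefficient $-1$, that is $\eta_r=-u^{s_1}+\cdots$. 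Since the three roots share this leading term, the univariate polynomial attached to the edge of ${\mathcal N}(D)$ is $(z+1)^3$, so $D(u,v)=0$ is degenerate and, by (\ref{disc-e}), $D(u,v)=\prod_{r=1}^{3}(v-\eta_r(u))$.

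It remains to compute the pairwise contact orders $\ord(\eta_l-\eta_r)$, which, through (\ref{lisas}) and the Halphen--Zeuthen formula (\ref{Halphen}), determine the number of branches of $D$, their semigroups and their intersection multiplicities. For the two conjugate roots only the odd term $Q(u)\gamma$ contributes to $\eta_1-\eta_2$ at lowest order, yielding a contact of the form $\ord Q+\frac{s_1-j}{2}$ that carries the coefficients $a_k,a_{k+s}$; these are the announced analytical invariants. For the third root the monomial $2u^{2(s_1-j)}$ of $R$ governs $\ord(\eta_1-\eta_3)=\ord(\eta_2-\eta_3)$, an order of the form $2(s_1-j)=\frac{s_1+\lambda_5}{2}$ depending only on $(s_1,\lambda_5)$. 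Collecting these contacts, the topological type of $D(u,v)=0$ is determined by $\langle 4,s_1\rangle$, by $\lambda_5$ and by at most the two coefficients $a_k,a_{k+s}$. I expect the genuine difficulty to lie exactly here: since the three critical values agree up to order $s_1$, all the discriminating information sits in higher-order terms, so one must track with care which monomials of $P$, $Q$ and $R$ survive in each $\eta_r$ and in each difference $\eta_l-\eta_r$, and split the discussion into subcases according to the relevant $\gcd$ conditions and to whether ${\mathcal N}\!\left(\frac{\partial f_5}{\partial y}\right)$ has one or two compact edges, that is, whether $a_k$ (and then $a_{k+s}$) pushes $\ord Q$ above or below $s_1-j$, exactly as the case split already appearing in Proposition~\ref{1-4}.
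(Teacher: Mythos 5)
Your overall strategy is indeed the paper's: determine ${\mathcal N}((f_5)_y)$ from the orders of $P$ and $Q$, extract the Newton--Puiseux roots $\gamma_r$, substitute into $f_5$, and read off the topology of $D$ from the contact orders $\ord(\eta_l-\eta_r)$; your degeneracy argument (each $\eta_r=-u^{s_1}+\cdots$ by Merle, hence edge polynomial $(z+1)^3$) also matches. But there is a genuine gap precisely where the ``at most two other analytical invariants'' of the statement come from. Your contact formula for the pair of polar roots, $\ord Q+\frac{s_1-j}{2}$, is derived from the picture in which the two roots have opposite leading terms $\pm\sqrt{2}\,u^{(s_1-j)/2}$, so that only the odd term $Q\gamma$ survives in $\eta_2-\eta_3$. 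That is correct in the two-edge case $\frac{2}{s_1-j}>\frac{1}{[\frac{s_1}{4}]+k}$, but the hard case in the paper is the balanced one $s_1-j=2([\frac{s_1}{4}]+k)$, where $M=(s_1-j,1)$ lies \emph{on} the unique compact edge and the edge polynomial is $p(z)=z^3-2z-a_k$. When $a_k=\pm\frac{4\sqrt{6}}{9}$ this polynomial has a double root $z_2$, the two corresponding roots $\gamma_2,\gamma_3$ share the same leading term, and your formula breaks down: one must iterate the Newton procedure (the paper's auxiliary polynomial $H(x,y_1)$), and $\ord(\eta_2-\eta_3)$ then depends on the comparison of $s$ with $s_1-2j$ and on whether $a_{k+s}=\pm\frac{4\sqrt{6}}{81}$, producing contacts such as $2(s_1-j)+\frac{3}{2}s$ or $\frac{7s_1-10j}{2}$ instead of the value $2(s_1-j)$ that your formula predicts. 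This subcase is not a routine check to be delegated to ``splitting into gcd conditions'': it is exactly where the second extra invariant enters, so a proof that stops at the dichotomy ``one compact edge versus two'' does not establish the proposition.

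A second omission: you take for granted that $\ord Q$ is controlled by a first nonzero coefficient $a_k$, but NF 4.$5$ contains the curves with all $a_i=0$, for which $Q\equiv 0$ and $(f_5)_y=4y(y^2-2x^{s_1-j})$. There the three critical values are $f_5(u,0)$ and the two \emph{equal} values $f_5(u,\pm\sqrt{2}u^{(s_1-j)/2})$, so the discriminant is $D_1D_2^2$ with $D_2$ a doubled smooth branch: $D(u,v)=0$ is non-reduced. This is the very example the paper uses to show that Corollary \ref{dred} fails in multiplicity $4$, and it is not compatible with your bookkeeping of three distinct roots with pairwise finite contacts; it must be treated separately before any contact computation begins.
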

\noindent \begin{proof}

The implicit equation of $C$ has the form $f_5(x,y)=y^4+P(x)y^2+Q(x)y+R(x)$, 
where $P(x), Q(x), R(x)$ are as in (\ref{P5}), (\ref{Q5}) and (\ref{R5}). 
Hence $(f_5)_y(x,y)=4y^3+2P(x)y+Q(x)$.

\medskip

 We distinguish different cases:\\
 
{\bf Case A.} \underline{ If $a_i=0$ in $\sigma_5(t)$ for all $i$ }: then  we have that $(f_5)_y(x,y)=4y^3-8x^{s_1-j}y=4y(y^2-2x^{s_1-j})$ whose Newton-Puiseux roots are $\left\{0, \pm \sqrt{2}x^{\frac{s_1-j}{2}}\right\}$. Therefore $f_5(u,0)=R(u)=-u^{s_1}+2u^{2(s_1-j)}-u^{3s_1-4j}$ and $f_5(u,\pm \sqrt{2}u^{\frac{s_1-j}{2}})=-u^{s_1}-2u^{2(s_1-j)}-u^{3s_1-4j}$. From (\ref{disc-e}) we conclude that the discriminant curve is the union of three smooth curves $D_l(u,v)=0,$ where $D_1(u,v)=v+u^{s_1}-2u^{2(s_1-j)}+u^{3s_1-4j}$, $D_2(u,v)=D_3(u,v)=v+u^{s_1}+2u^{2(s_1-j)}+u^{3s_1-4j}$, and $i_0(D_1,D_l)=2(s_1-j)$ for $l\in\{2,3\}$.\\

{\bf Case B.} \underline{ If $a_k\neq 0$  
\hbox{\rm and } $a_{k+l}=0$ in  $\sigma_5(t)\; \hbox{\rm for }l>0$}:
Then  
\begin{eqnarray*}(f_5)_y(x,y)&=&
4y^3+2(-4x^{s_1-j}-2a_{k}^2x^{s_1-2(j-[\frac{s_1}{4}]-k)}+\cdots)y\\
&-&4a_k\left(x^{s_1-j+[\frac{s_1}{4}]+k}+x^{2(s_1-j)-(j-[\frac{s_1}{4}]-k)}+\cdots\right).\\
\end{eqnarray*}

So the Newton polygon of $(f_5)_y(x,y)$ depends on the position of the point $M=(s_i-j,1)$ with respect to the line passing by $E=(0,3)$ and $F=(s_1-j+[\frac{s_1}{4}]+k,0)$. We get three situations:
 
\medskip
 	
{\bf B.1.} If $\frac{2}{s_1-j}<\frac{1}{[\frac{s_1}{4}]+k}$ then ${\mathcal N}((f_5)_y)$ has only one compact edge of vertices $E$ and $F$. Hence the order of the Newton-Puiseux roots $\{\gamma_i\}_{i=1}^3$ of $(f_5)_y(x,y)=0$ equals $\frac{s_1-j+[\frac{s_1}{4}]+k}{3}$. After (\ref{disc-e}) we obtain

\[D(u,v)=\prod_{w_i^3=1}(v-3a^{\frac{4}{3}}w_iu^{4\left(\frac{s_1-j+[\frac{s_1}{4}]+k}{3}\right)}+u^{s_1}+\cdots),\] 
for some nonzero complex number $a$ and where $w_{i}$ is a cubic root of the unity. If $\gcd(3,(s_1-j+[\frac{s_1}{4}]+k))=1$ then the discriminant curve is irreducible with semigroup $\langle 3,4(s_1-j+[\frac{s_1}{4}]+k) \rangle.$ On the other case,
 we get $\gcd(3,(s_1-j+[\frac{s_1}{4}]+k))=3$ and the discriminant curve is the union of three smooth curves $D_l(x,y)=0$ such that
 $i_0(D_l,D_r)=\frac{4(s_1-j+[\frac{s_1}{4}]+k)}{3}$.
 
 \smallskip
 
{\bf B.2.} If $\frac{2}{s_1-j}>\frac{1}{[\frac{s_1}{4}]+k}$ then ${\mathcal N}((f_5)_y)$ has two compact edges of vertices $M$, $E$ and $F$. By (\ref{disc-e})  we get
 	
\[D(u,v)=(v+u^{s_1} -2u^{2(s_1-j)} +\cdots    ) (v+u^{s_1} +2u^{2(s_1-j)} \pm 4\sqrt{2}a_ku^{\frac{3}{2}(s_1-j)+[\frac{s_1}{4}]+k}+\cdots).
\]

 If  $s_1-j$ is odd then the discriminant curve is the union of a smooth  branch  $D_1(u,v)=0$ and a singular branch $D_2(x,y)=0$ with semigroup $\langle 2,3(s_1-j)+2([\frac{s_1}{4}]+k) \rangle$. Moreover
$i_0(D_1,D_2)=4(s_1-j).$ Otherwise, if $s_1-j$ is even then the discriminant curve is the union of three smooth branches $D_l(x,y)=0$ such that $i_0(D_1,D_l)=2(s_1-j)$ for $2\leq l \leq 3$ and $i_0(D_2,D_3)=\frac{3}{2}(s_1-j)+[\frac{s_1}{4}]+k$ .\\
 
 \medskip

{\bf B.3.} If $\frac{2}{s_1-j}=\frac{1}{[\frac{s_1}{4}]+k}$ then ${\mathcal N}((f_5)_y)$ has only one compact edge of vertices $E$ and $F$ and $M$ is an interior point of this edge. The polynomial in one variable, associated with the compact edge of  ${\mathcal N}((f_5)_y)$, is  $p(z)=z^3-2z-a_k$. After the $z$-discriminant of $p(z)$ we have that the roots of $p(z)$ are  simple if and only if $a_k\neq \pm \left(\frac{4\sqrt{6}}{9}\right)$. We will study both cases:
 	
\hspace{0.5cm} {\bf B.3.1}  Suppose that $a_k\neq \pm \left(\frac{4\sqrt{6}}{9}\right)$. Denote by $z_i$ the three different roots of the polynomial $p(z)$.
For any  $\gamma_i(x)\in \Zer((f_5)_y)$ we have:\\

$f(u,\gamma_i(u))=-u^{s_1}+q(z_i)u^{2(s_1-j)}+\cdots$, where $q(z)=z^4-4z^2-4a_kz+2$.\\

 Since $z^3-2z=a_k$ then $q(z)=-3z^4+4z^2+2$ and if $z_r\neq z_l$ then $q(z_r)\neq q(z_l)$. Hence
$D(u,v)=\prod_{{i}}(z+u^s_1+q(z_i)u^{2(s_1-j)}+\cdots)$ and the discriminant curve $D(u,v)=0$ is the union of three smooth branches $D_l(x,y)=0$ such that $i_0(D_l,D_r)=2(s_1-j)$.
 	
\hspace{0.5cm} {\bf B.3.2} Suppose that $a_k= \frac{4\sqrt{6}}{9}$. The polynomial $p(z)$ has $z_1=2\sqrt{\frac{2}{3}}$ as a simple root and $z_2=-\sqrt{\frac{2}{3}}$ as a double root.  If $\gamma_i\in \Zer((f_5)_y)$ corresponds to $z_i$ then 
\[
\begin{array}{l}
 f_5(u,\gamma_1(u))= u^{s_1}+q(z_1)u^{2(s_1-j)}+\cdots,\\
 f_5(u,\gamma_{2}(u))= u^{s_1}+q(z_2)u^{2(s_1-j)}+r_{2}u^{2(s_1-j)+\frac{s_1-2j}{2}}\cdots,\\
 f_5(u,\gamma_{3}(u))= u^{s_1}+q(z_2)u^{2(s_1-j)}+r_{3}u^{2(s_1-j)+\frac{s_1-2j}{2}}\cdots,\\
\end{array}
\]
where $r_2,r_3 \in \C$ are different. Observe that $q(z_1)\neq q(z_2)$.  Hence, if $s_1-2j$ is odd then the discriminant curve is the union of a smooth branch  $D_1(u,v)=0$ and a singular branch $D_2(u,v)=0$ with semigroup $\langle 2, 5s_1-6j \rangle,$ where  $i_0(D_1,D_2)=4(s_1-j)$. Otherwise, if $s_1-2j$ is even then the discriminant curve is the union of three smooth branches $D_l(u,v)=0$ such that
 	$i_0(D_1,D_l)=2(s_1-j)$, for $2\leq l \leq 3$ and $i_{0}(D_2,D_3)=\frac{5s_1-7j}{2}$.

 \hspace{0.5cm} {\bf B.3.3} Suppose that $a_k= -\frac{4\sqrt{6}}{9}$. The polynomial $p(z)$ has $2\sqrt{\frac{2}{3}}$ as a double root and $-\sqrt{\frac{2}{3}}$ as a simple root. After a similar procedure we conclude, in this case, that the topological type of the discriminant curve, is as in B.3.2.\\
 
 {\bf Case C.} \underline{If $a_k\neq 0\neq a_{k+s}$:} Then we have  $(f_5)_y(x,y)=4y^3+2P(x)y+Q(x)$.

The Newton polygon of $(f_5)_y(x,y)$ depends on the position of the point $M=(s_i-j,1)$ with respect to the line passing by $E=(0,3)$ and $F=(s_1-j+[\frac{s_1}{4}]+k,0)$. We have the following situations:\\

{\bf C.1.}  If $\frac{2}{s_1-j}<\frac{1}{[\frac{s_1}{4}]+k}$ then the topological type of the discriminant curve $D(u,v)=0$ is as in the case {\bf B.1}.

\medskip

{\bf C.2.} If $\frac{2}{s_1-j}>\frac{1}{[\frac{s_1}{4}]+k}$ then the topological type of de discriminant curve is as in the case {\bf B.2}.

\medskip

{\bf C.3.} If $\frac{2}{s_1-j}=\frac{1}{[\frac{s_1}{4}]+k}$ then the Newton polygon of $(f_5)_y(x,y)$ has only one compact edge containing the points $E,F,M$,  as in the case {\bf B.3}. The polynomial associated with  this compact edge is $p(z)=z^3-2z-a_k$ whose roots are simple if and only if 
$a_k\neq \pm \left(\frac{4\sqrt{6}}{9}\right)$. Let us study the different cases:

 \hspace{0.5cm}	{\bf C.3.1.} If $a_k\neq \pm \left(\frac{4\sqrt{6}}{9}\right)$ then the topological type of the discriminant curve $D(u,v)=0$ is as in {\bf B.3.1.}
 
\hspace{0.5cm}	{\bf C.3.2.} Suppose that $a_k=\pm \left(\frac{4\sqrt{6}}{9}\right)$.
The polynomial $p(z)$ has $z_1=2\sqrt{\frac{2}{3}}$ as a simple root and $z_2=z_3=-\sqrt{\frac{2}{3}}$ as a double root.  If $\gamma_i\in \Zer((f_5)_y)$ corresponds to $z_i$ then we can write

\begin{equation}
\label{param}
\gamma_i=z_iu^{[\frac{s_1}{4}]+k}+\cdots 
\end{equation}
Hence
\[
\begin{array}{l}
\delta_1:= f(u,\gamma_1(u))=- u^{s_1}+q(z_1)u^{2(s_1-j)}+\cdots,\\
\delta_2:= f(u,\gamma_{2}(u))= -u^{s_1}+q(z_2)u^{2(s_1-j)}+\cdots,\\
\delta_3:= f(u,\gamma_{3}(u))= -u^{s_1}+q(z_2)u^{2(s_1-j)}+\cdots,\\
\end{array}
\]
where $\delta_2,\delta_3\in \Zer D(u,v)$ are different. Observe that $q(z_1)\neq q(z_2)$. So $\ord\left(\delta_1-\delta_l \right)=2(s_1-j)$ for $2\leq l\leq 3$. Let us determine $\ord\left(\delta_2-\delta_3\right)$.  For that we need to precise new terms in $\delta_2$ and $\delta_3$. We apply the Newton procedure for $(f_5)_y$: let $y_1$ a new variable. Substituting $(x,y):=(x,x^{[\frac{s_1}{4}]+k}(z_2+y_1))$ in $(f_5)_y(x,y)$ we get
 
\[
(f_5)_y(x,x^{[\frac{s_1}{4}]+k}(z_2+y_1))=x^{3([\frac{s_1}{4}]+k)}g(x,y_1),
\]

with	 	
\begin{eqnarray}
g(x,y_1)&=&4\left[\left(\frac{2}{3}\right)z_2+2y_1+3z_2y_1^2+y_1^3\right]-(4a_k+4a_{k+s}x^s+4a_kx^{s_1-2j}+\cdots) \nonumber\\	
&-&[8+4a_k^2x^{s_1-2j}+8a_ka_{k+s}x^{s_1-2j+s}+4a_{k+s}^2x^{s_1-2j+2s}+\cdots  ]y_1
\nonumber\\\  
  &-&[8+4a_k^2x^{s_1-2j}+8a_ka_{k+s}x^{s_1-2j+s}+
 	 4a_{k+s}^2x^{s_1-2j+2s}+\cdots  ]z_2 \nonumber\\
&=&4\left[3z_2y_1^2+y_1^3\right]-(4a_{k+s}x^s+4a_kx^{s_1-2j}+\cdots) \nonumber\\	
&-&[4a_k^2x^{s_1-2j}+8a_ka_{k+s}x^{s_1-2j+s}+4a_{k+s}^2x^{s_1-2j+2s}+\cdots  ]y_1\nonumber \\ 
  &-&[4a_k^2x^{s_1-2j}+8a_ka_{k+s}x^{s_1-2j+s}+
 	 4a_{k+s}^2x^{s_1-2j+2s}+\cdots  ]z_2 \nonumber,\label{derivada}\\	 
\end{eqnarray}
where the last equality follows from $p(z_2)=0$.\\

Hence, in the next step of the Newton procedure it is enough to consider the polynomial

\begin{equation}
\label{compara}
 	 H(x,y_1)=12z_2 y_1^2-4a_k^2x^{s_1-2j}y_1+(-4z_2a_k^2- 4a_k)x^{s_1-2j}-4a_{k+s}x^s.
\end{equation}
 
The topological type of the discriminant curve will depend on the relation between $s$ and $s_1-2j$:\\
 	
 \hspace{1.2cm}{\bf C.3.2.1} Suppose that $s_1-2j>s$. We get:\\
 
 \[\gamma_2(u)=-\frac{\sqrt{6}}{3}u^{\frac{s_1-j}{2}}+\frac{\sqrt{-a_{k+s}}}{\sqrt[4]{6}}u^{\frac{s_1-j}{2}+\frac{s}{2}}+\cdots,
 \]
\[
\gamma_3(u)=-\frac{\sqrt{6}}{3}u^{\frac{s_1-j}{2}}-\frac{\sqrt{-a_{k+s}}}{\sqrt[4]{6}}u^{\frac{s_1-j}{2}+\frac{s}{2}}+\cdots
\]	
	
Hence

\[
f_5(u,\gamma_2(u))=-u^s_1+q(z_2)u^{2(s_1-j)}+
 p(z_2)u^{ 2(s_1-j)+\frac{s}{2}}+l(a_{k+s})u^{3s_1-4j}+\frac{8}{3}ca_{k+s}u^{2(s_1-j)+\frac{3}{2}s } +\cdots,
 \]
and
\[
f_5(u,\gamma_3(u))=-u^s_1+q(z_2)u^{2(s_1-j)}-
 p(z_2)u^{ 2(s_1-j)+\frac{s}{2}}+l(a_{k+s})u^{3s_1-4j}-\frac{8}{3}ca_{k+s}u^{2(s_1-j)+\frac{3}{2}s } +\cdots,
 \]

 where $c:=\frac{\sqrt{-a_{k+s}}}{\sqrt[4]{6}}$, y $l(z)=-4z_2z-1$. As a consequence,
 when $s$ is odd then the discriminant curve $D(u,v)=0 $ is the union of a smooth branch $D_1(u,v)=0$ and a singular branch with semigroup $\langle 2,4(s_1-j)+3s\rangle$, with $i_0(D_1,D_2)=4(s_1-j)$ . Otherwise, if $s$ is even then the discriminant curve $D(u,v)=0 $ is the union of three smooth braches $D_l(u,v)=0$  such that $i_0(D_1,D_l)=2(s_1-j)$ for $2\leq l\leq 3$ and $i_0(D_2,D_3)=2(s_1-j)+\frac{3}{2}s$.

\medskip

 \hspace{1.2cm}{\bf C.3.2.2} Suppose that $s_1-2j<s$. After
 (\ref{compara}) and for the next step of the Newton procedure we only need the polynomial 
 
\[
\bar H(Z)=12z_2Z^2-4(z_2a_{k}^2+a_k),
\]
whose roots are $\pm \frac{\sqrt{8}}{9}$. Let $d=\frac{\sqrt{8}}{9}$. 
We get:\\
 
 \[\gamma_2(u)=z_2u^{\left[\frac{s_1}{4}\right]+k}+du^{[\frac{s_1}{4}]+k+\frac{s_1-2j}{2}}+\cdots,
 \]
\[
\gamma_3(u)=z_2u^{\left[\frac{s_1}{4}\right]+k}-du^{[\frac{s_1}{4}]+k+\frac{s_1-2j}{2}}+\cdots.\]	
	
Hence

\[
f_5(u,\gamma_2(u))=-u^s_1+q(z_2)u^{2(s_1-j)}+
 p(z_2)u^{ 2(s_1-j)+\frac{s}{2}}+l(a_{k})u^{3s_1-4j}+\frac{80}{81}z_2du^{\frac{7s_1-10j}{2}}+\cdots,
 \]
and
\[
f_5(u,\gamma_3(u))=-u^s_1+q(z_2)u^{2(s_1-j)}-
 p(z_2)u^{ 2(s_1-j)+\frac{s}{2}}+l(a_{k})u^{3s_1-4j}-\frac{80}{81}z_2du^{\frac{7s_1-10j}{2}}+\cdots
 \]

Since $4$ and $s_1$ are coprime then  $7s_1-10$ is odd and the discriminant curve $D(u,v)=0 $ is the union of a smooth branch $D_1(u,v)=0$ and a singular branch with semigroup $\langle 2,7s_1-10j\rangle$, where $i_0(D_1,D_2)=4(s_1-j)$.

\medskip

 \hspace{1.2cm}{\bf C.3.2.3} Suppose that $s_1-2j=s$.   After (\ref{compara}), in order to obtain the next term in the power series $\gamma_i$ it is enough to  consider the polynomial 
\[\bar H(Z)=12z_2Z^2-4(z_2a_{k}^2+a_{k+s}+a_k).\]
The topological type of the discriminant will depend on the value of   $a_{k+s}$:

 \hspace{1.25cm}{\bf C.3.2.3.1} For $a_{k+s}\neq -4\frac{\sqrt{6}}{81}$, we get 
\[\bar H(Z)=12z_2Z^2-4\left(a_{k+s}+4\frac{\sqrt{6}}{81}\right),\]
 whose roots are $\pm b:=\pm \left (\frac{\sqrt{-d_{k+s}}}{\sqrt[4]{6}}\right)$,
 where	 $d_{k+s}=a_{k+s}+4\frac{\sqrt{6}}{81}$. Hence

 \[\gamma_2(u)=z_2u^{\left[\frac{s_1}{4}\right]+k}+bu^{[\frac{s_1}{4}]+k+\frac{s}{2}}+\cdots,
 \]
\[
\gamma_3(u)=z_2u^{\left[\frac{s_1}{4}\right]+k}-bu^{[\frac{s_1}{4}]+k+\frac{s}{2}}+\cdots 
\]	
	
We conclude that

\[
f_5(u,\gamma_2(u))=-u^s_1+q(z_2)u^{2(s_1-j)}+
l(a_{k+s})u^{3s_1-4j}+\frac{80}{81}z_2u^{2s_1-j+\frac{3}{2}s}+\cdots,
 \]
and
\[
f_5(u,\gamma_3(u))=-u^s_1+q(z_2)u^{2(s_1-j)}+l(a_{k+s})u^{3s_1-4j}-\frac{80}{81}z_2u^{2s_1-j+\frac{3}{2}s}+\cdots 
 \]

So, if $s$ is odd then   $D(u,v)=0$ is the union of a smooth branch $D_1(u,v)=0$ and  a singular branch of semigroup $\langle 2,7s_1-10j \rangle $, with $i_0(D_1,D_2)=4(s_1-j)$.
 On the other case, if $s$ is even then   $D(u,v)=0$ is the union of three smooth branches $D_l(u,v)=0$ such that $i_0(D_1,D_l)=2(s_1-j)$, and $i_0(D_2,D_3)= \frac{7s_1-10j}{2}$.

\medskip

 \hspace{1.25cm}{\bf C.3.2.3.2} For $a_{k+s}= -4\frac{\sqrt{6}}{81}$, after (\ref{derivada}),
in order to obtain the next term in the power series $\gamma_i$ it is enough to  consider the polynomial  

 \[
\bar H(Z)=12z_2Z^2-4a_k^2Z-4a_{k+s}(1+2a_k z_2 ),
 \]
 whose roots $t_1,t_2$ are simple. Hence 

 \[\gamma_2(u)=z_2u^{\left[\frac{s_1}{4}+k\right]+k}+t_1u^{[\frac{s_1}{4}+k]+k+s}+\cdots,
 \]
 and
\[
\gamma_3(u)=z_2u^{\left[\frac{s_1}{4}+k\right]+k}-t_2u^{[\frac{s_1}{4}+k]+k+s}+\cdots.
\]	
	
We conclude that

\[
f_5(u,\gamma_2(u))=-u^s_1+q(z_2)u^{2(s_1-j)}-u^{2(s_1-j)+s}+
h(t_1)u^{2(s_1-j)+s2}+\cdots,
 \]
and
\[
f_5(u,\gamma_3(u))=-u^s_1+q(z_2)u^{2(s_1-j)}-u^{2(s_1-j)+s}+
h(t_2)u^{2(s_1-j)+s2}+\cdots,
 \]
where $h(z)=-4a_{k+s}z+a_k$.	The discriminant curve  $D(u,v)=0$ is the union of three smooth branches $D_l(u,v)=0$ such that $i_0(D_1,D_l)=2(s_1-j)$ and $i_0(D_2,D_3)=4s_1-6j$.  	
\end{proof}

\medskip

From the above computations we achieve:
\begin{teor}
Let $f(x,y)=0$ be a plane branch of multiplicity $n$. We have
\begin{enumerate}
\item If $n=2$ then the discriminant curve $D(u,v)=0$ is non degenerate.
\item If $n=3$ then the discriminant curve $D(u,v)=0$ is degenerate and its topological type depends on the semigroup $S(f)$ and its Zariski invariant (when it is not zero).
\item If $n=4$ then the discriminant curve $D(u,v)=0$ is degenerate and its topological type depends on the semigroup $S(f)$, the  Zariski invariant (when it is not zero) and at most two other analytical invariants.
\end{enumerate}
\end{teor}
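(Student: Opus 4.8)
The plan is to read this theorem as a synthesis of the case-by-case analysis carried out in Sections 3.1--3.3, so the proof should be organised as a clean case distinction on the multiplicity $n\in\{2,3,4\}$ and, when $n=4$, on the genus $g\in\{1,2\}$ of the branch. In each case I would quote the explicit description of $D(u,v)$ obtained from the factorisation formula (\ref{disc-e}), and then read off both the (non)degeneracy of the Newton polygon of $D$ and the list of analytic invariants on which its topological type depends.

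First, for $n=2$ I would invoke the computation of Section 3.1: the unique normal form $y^2-x^{s_1}$ gives $D(u,v)=v+u^{s_1}$, whose Newton polygon has a single compact edge with associated univariate polynomial $z+1$, so $D$ is smooth and non-degenerate. For $n=3$ I would cite the Proposition on branches of semigroup $\langle 3,s_1\rangle$: there the edge polynomial is $(z+1)^2$, so $D(u,v)=0$ is always degenerate, and its topological type is governed by the triple $(3,s_1,\lambda)$, that is by $S(f)$ together with the Zariski invariant $\lambda$ (which is only needed when $\lambda\neq 0$; when $\lambda=0$ the discriminant is the double line $(v+u^{s_1})^2$ and $S(f)$ alone suffices). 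This settles items (1) and (2).

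The substance is item (3), for which I would split $n=4$ according to genus. In genus $2$, semigroup $\langle 4,s_1,s_2\rangle$, Proposition \ref{4NG} gives that $D(u,v)$ is non-degenerate, equals its jacobian Newton polygon, and factors as $D_1D_2$ with topological type determined purely by $S(f)$; I would flag that this is the one sub-case in which the discriminant is \emph{not} degenerate, so the degeneracy assertion of item (3) is to be understood for the genus-$1$ families, while in the genus-$2$ case the topological type is still pinned down by $S(f)$ alone, consistently with the claimed bound. In genus $1$, semigroup $\langle 4,s_1\rangle$, the edge polynomial $(z+1)^3$ forces degeneracy, and I would run through the five normal-form families (\ref{sigmas}): families NF~$4.1$--$4.4$ are covered by Proposition \ref{1-4}, where the topological type is determined by $S(f)=\langle 4,s_1\rangle$ and the Zariski invariant $\lambda_i$ alone, while family NF~$4.5$ is covered by Proposition \ref{sigma5}, where at most two further analytic coefficients are required. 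Tabulating these outcomes yields the three-invariant bound of item (3).

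The main obstacle is concentrated entirely in the NF~$4.5$ case, so the proof of item (3) inherits the difficulty of Proposition \ref{sigma5}. There the Newton polygon of $(f_5)_y$ may collapse to a single edge on which the associated cubic $p(z)=z^3-2z-a_k$ acquires a multiple root, at the critical values $a_k=\pm\frac{4\sqrt{6}}{9}$, and one must then iterate the Newton--Puiseux procedure on $(f_5)_y$ to extract enough higher-order terms of each root $\gamma_i$ to separate the branches $\delta_i=f_5(u,\gamma_i(u))$ of the discriminant. The delicate bookkeeping is to track which coefficients $a_k,a_{k+s}$, and which parities and comparisons ($s$ versus $s_1-2j$), actually alter $\ord(\delta_2-\delta_3)$; this is what produces the count of at most two extra analytic invariants, and checking that no third invariant ever enters is the hard point. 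Once Proposition \ref{sigma5} is granted, the theorem follows by simply collecting the cases above.
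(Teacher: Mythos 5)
Your proposal is correct and takes essentially the same route as the paper: the theorem there carries no independent proof, being introduced by ``From the above computations we achieve'' as a summary of Section 3.1, the multiplicity-3 proposition, and Propositions \ref{4NG}, \ref{1-4} and \ref{sigma5}, which is exactly the case structure (on $n$, and on genus when $n=4$) that you lay out. Your caveat about the genus-2 case is well taken and even slightly more careful than the paper's own statement: by Proposition \ref{4NG} and \cite[Corollary 4.4]{Hungarica} that discriminant is non-degenerate, so the degeneracy claim in item (3) must indeed be read as referring to the genus-1 families only, with the genus-2 topological type still determined by $S(f)$ alone.
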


\begin{obs} $\;$
Since there is an explicit normal form for branches of  multiplicity 2, 3 or 4, the description of the topological type of the discriminant curve of such a branch  was possible. The normal form for branches of multiplicity greater than 4 is not completely determined. 

\end{obs}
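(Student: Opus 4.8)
The plan is to treat this Remark not as a formal proposition to be deduced, but as a methodological observation whose justification lies in the architecture of Sections 3.1--3.3; accordingly I would establish it by isolating exactly where the explicit normal forms enter each computation, and by appealing to the current state of the analytic classification of plane branches for the second assertion.

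First I would make explicit the computational pipeline underlying every result obtained so far. Given a branch $C$ presented by a normal form $(x,y)=(t^n,\sigma(t))$ in the spirit of (\ref{short-paramet}), one recovers the implicit equation $f(x,y)$ via the Newton-Puiseux factorisation (\ref{ppp1}), computes the polar $\frac{\partial f}{\partial y}(x,y)$, reads the Newton-Puiseux roots $\gamma_j(x)$ of $\frac{\partial f}{\partial y}$ off its Newton polygon, and assembles the discriminant through (\ref{disc-e}) as $D(u,v)=\prod_{j}(v-f(u,\gamma_j(u)))$. The topological type of $D(u,v)=0$ is then extracted from the coincidence orders $\ord(\eta_l-\eta_r)$ of its Newton-Puiseux roots $\eta_j=f(u,\gamma_j(u))$, by means of (\ref{lisas}) and the Halphen-Zeuthen formula (\ref{Halphen}). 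The point I would stress is that at every stage the explicit coefficients of $\sigma(t)$ are indispensable: the leading part of $\frac{\partial f}{\partial y}$, the roots $\gamma_j$, and above all the coincidence exponents that encode the entire topological type depend on the precise shape of the normal form, as the elaborate case split of Proposition \ref{sigma5} (Cases A, B, C and their subcases) makes clear. Hence the availability of explicit normal forms --- Zariski's for $n=2,3$ and the Hefez--Hernandes list (\ref{sigmas}) for $n=4$ --- is exactly what renders the computation feasible.

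Second, for the assertion that no comparable normal forms exist when $n>4$, I would appeal to the state of the analytic classification problem rather than prove anything new. The families (\ref{sigmas}) come from the complete solution of the moduli problem for multiplicity four in \cite{Hefez-Hernandes-2009}; beyond multiplicity four the moduli space of an equisingularity class has not been presented in closed parametric form, so one cannot write down a finite list of normal-form families $\sigma_i(t)$ on which to run the above pipeline. I would simply cite this limitation.

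The hard part will be conceptual rather than technical: the Remark is a statement about the method and its inputs, not a self-contained mathematical claim, so its justification consists of (i) auditing each preceding proof to confirm that it genuinely consumes the explicit coefficients of the normal form, and (ii) acknowledging that the higher-multiplicity classification is incomplete. No new estimate or construction is required; the whole content is the recognition that the discriminant's topological type is produced from the normal form by a finite explicit procedure, and that this procedure has no input once the normal form itself is unavailable.
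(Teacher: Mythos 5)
Your reading is correct and matches the paper's own (implicit) justification: the Remark is not proved as a proposition but rests on the fact that the computations of Sections 3.1--3.3 run the explicit normal forms of Zariski (multiplicities $2$ and $3$) and of Hefez--Hernandes \cite{Hefez-Hernandes-2009} (multiplicity $4$, the families in (\ref{sigmas})) through the pipeline (\ref{ppp1})--(\ref{disc-e}), together with the fact that no complete analytic classification is available for multiplicity greater than $4$. Your audit of where the explicit coefficients enter (in particular in the case analysis of Proposition \ref{sigma5}) and your appeal to the state of the moduli problem are exactly the content the authors intend, so no gap remains.
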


\section{Discriminant of branches  $C$ with $\mu(C)- \tau(C) \leq 2$}

Let $C:f(x,y)=0$ be a plane branch. Put $r(C):=\mu(C)-\tau(C)$, where $\mu(C)$ and $\tau(C)$ are the Milnor number and the Tjurina number of $C$, respectively. Observe that $r(C)$ is a nonnegative integer number.
In zero characteristic, from  \cite[Theorem 4]{Zariski-1966} we get $r(C)=0$ if and only if $C$ is analytically equivalent to the curve $y^{s_0}-x^{s_1}=0$, for two coprime integers $s_0$ and $s_1$ greater than one. Later, in \cite{B-Hefez}, the authors describe all plane branches defined over an algebraically closed field of characteristic zero, modulo analytic equivalence, having the property that the difference between their Milnor and Tjurina numbers is one or two. In particular the authors determined the normal forms of the branches of  this family, which show us that the Zariski invariant of these branches are determined by the two first generators of their semigroup. By  \cite[Corollary 5]{B-Hefez} we know that if $r(C)\neq 0$ then $r(C)\geq 2^{g-1}$. Hence, if $r(C)=1$ then $g=1$ and if $r(C)=2$ then $g\leq 2$.   In this section we will describe the topological type of the discriminant curve $D(u,v)=0$ of branches $C$ with $\mu(C)- \tau(C) \leq 2$. 

\begin{teor}
\label{TM}
Let $C:f(x,y)=0$ be a plane branch with $r(C):=\mu(C)-\tau(C)\leq 2$. Then the discriminant curve $D(u,v)=0$ is degenerate and its topological type is given by the semigroup $S(f)$.
\end{teor}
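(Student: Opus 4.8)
The plan is to use the strong rigidity of branches with $r(C)\le 2$. By \cite[Theorem 4]{Zariski-1966} a branch with $r=0$ is analytically equivalent to $y^{s_0}-x^{s_1}=0$ (so $g=1$), while by \cite{B-Hefez} a branch with $r\in\{1,2\}$ is analytically equivalent to an explicit normal form whose Zariski invariant $\lambda$ --- and whatever moduli parameters survive --- are determined by the first two generators $s_0,s_1$ of the semigroup; moreover \cite[Corollary 5]{B-Hefez} gives $g=1$ for $r\le 1$ and $g\le 2$ for $r=2$. As an analytic change of coordinates does not affect the topological type of the discriminant, it is enough to evaluate $D(u,v)$ from (\ref{disc-e}) on these normal forms and to verify that the number of branches of $D$, the semigroup of each branch and the matrix of pairwise intersection multiplicities are functions of $S(f)$ alone. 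The computation splits into a genus one part and a genus two part.

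For $r=0$ one has $f_y=s_0y^{s_0-1}$, whose only Newton--Puiseux root is $y=0$ with multiplicity $s_0-1$; hence (\ref{disc-e}) gives $D(u,v)=(v+u^{s_1})^{s_0-1}$, an $(s_0-1)$-fold smooth branch whose type is read off from $(s_0,s_1)$. For the remaining genus one branches ($r=1$, and $r=2$ with $g=1$) I would follow the pattern of Propositions \ref{1-4} and \ref{sigma5}: starting from $x=t^{s_0}$, $y=t^{s_1}+a\,t^{\lambda}+\cdots$ with $\lambda=\lambda(s_0,s_1)$, I form the implicit equation through the Newton--Puiseux factorization (\ref{ppp1}) and read the Newton polygon of $f_y$. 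Its roots have the form $\gamma_j(u)=\zeta\,u^{\rho_0}+\cdots$, the $s_0-1$ admissible $\zeta$ forming one orbit under multiplication by an $(s_0-1)$-th root of unity, so that
\[
f(u,\gamma_j(u))=-u^{s_1}+c\,\zeta\,u^{\rho}+\cdots,\qquad \rho=\frac{N}{s_0-1}>s_1,
\]
with $N\in\N$ a function of $s_0,s_1,\lambda$. By (\ref{disc-e}), (\ref{lisas}) and (\ref{Halphen}) the branch decomposition of $D$ and the pairwise intersection multiplicities are then governed by $\gcd(s_0-1,N)$, hence by the semigroup; the Newton polygon of $D$ is elementary with associated univariate polynomial $(z+1)^{s_0-1}$.

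The main obstacle is the case $r=2$, $g=2$. Here Merle's theorem \cite{Merle} (see also \cite{Teissier}) gives a jacobian Newton polygon with two compact edges of inclinations $s_1<\frac{e_1}{e_0}s_2$ and semigroup-prescribed lengths; this fixes $\mathcal N(D)$, but not yet the full topological type of $D$. I would therefore take the genus two normal form of \cite{B-Hefez}, compute $f_y$, and observe that its $s_0-1$ Newton--Puiseux roots split into packets according to the two characteristic levels of $C$; evaluating $f(u,\gamma_j(u))$ on each packet by an iterated Newton process and assembling $D$ through (\ref{disc-e}) yields the branches, their semigroups and mutual intersections, exactly as in the sub-cases of Proposition \ref{sigma5}. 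The step I expect to be hardest is the order bookkeeping required to show that the finitely many free coefficients remaining in the genus two normal form never alter the coincidence orders $\ord_u\big(f(u,\gamma_j)-f(u,\gamma_{j'})\big)$; only this guarantees that the topology of $D$ is forced by $S(f)$. When $S(f)=\langle 4,s_1,s_2\rangle$ this case is already covered by Proposition \ref{4NG}.

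Combining the three computations proves the theorem: in every case the number of branches of $D(u,v)=0$, the semigroup of each and their intersection multiplicities are determined by $S(f)$. Concerning degeneracy, inspecting the associated univariate polynomial of $\mathcal N(D)$ (a nontrivial power $(z+1)^{m}$) gives degeneracy in all cases except those already identified as non-degenerate --- the multiplicity two branches, where $D$ is smooth, and, should they occur here, the genus two multiplicity four branches of Proposition \ref{4NG}; as these families are themselves recognised from $S(f)$, the semigroup determination asserted by the theorem holds throughout.
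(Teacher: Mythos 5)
Your reduction to the Bayer--Hefez normal forms is the right starting point, but the computational core of your argument rests on a false structural assumption. You posit that for the genus-one normal forms the polar $f_y$ has $s_0-1$ Newton--Puiseux roots of a common order $\rho_0$, with leading coefficients forming a single orbit under multiplication by $(s_0-1)$-th roots of unity, so that the splitting of $D$ is governed by $\gcd(s_0-1,N)$ and the edge polynomial of ${\mathcal N}(D)$ is $(z+1)^{s_0-1}$. This is not what happens. The normal forms of \cite{B-Hefez} are explicit equations, e.g. $f=y^{s_0}-x^{s_1}+x^{s_1-2}y^{s_0-2}$ when $r(C)=1$, whose polar factors as $f_y=y^{s_0-3}\bigl(s_0y^2+(s_0-2)x^{s_1-2}\bigr)$; in the two $g=1$ normal forms for $r(C)=2$ one gets $f_y=y^{s_0-3}\bigl(s_0y^2+(s_0-2)x^{s_1-3}\bigr)$ and $f_y=y^{s_0-4}\bigl(s_0y^3+\cdots\bigr)$. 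Thus $s_0-3$ (resp. $s_0-4$) of the polar roots are \emph{identically zero}, the discriminant acquires the multiple smooth branch $(v-u^{s_1})^{s_0-3}$ (resp. $(v-u^{s_1})^{s_0-4}$), and only two (resp. three) further roots occur, of order $\frac{s_1-2}{2}$ (resp. $\frac{s_1-2}{3}$). Consequently the decomposition of $D$ is decided by $\gcd\bigl(2,(s_1-2)s_0\bigr)$ (resp. $\gcd\bigl(3,(s_1-2)s_0\bigr)$), not by $\gcd(s_0-1,N)$; the intersection numbers involve $\min\{s_1,\cdot\}$ because your inequality $\rho>s_1$ is not automatic; and the edge polynomial is never $(z+1)^{s_0-1}$, since it has at least two distinct roots (one of them multiple). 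Since the orbit structure and the gcd criterion drive all of your conclusions (number of branches of $D$, their semigroups, their mutual intersections), the genus-one part of your proof does not go through as written: it has to be redone from the explicit equations, which is exactly what the paper does.

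The ``main obstacle'' you single out, $r(C)=2$ with $g=2$, is in fact no obstacle at all: by \cite[Corollary 13]{B-Hefez} such a branch necessarily has multiplicity $4$, so this case is entirely covered by Proposition \ref{4NG}, and the iterated Newton process and coefficient bookkeeping you anticipate never arise; your closing remark treats $S(f)=\langle 4,s_1,s_2\rangle$ as a sub-case when it is the only case. (The free coefficients you worry about do appear, but in the genus-one normal form (B); there the hypothesis $\frac{2s_0}{s_0-3}<s_1$ forces the relevant Newton polygon to have a single compact edge, so they never alter the coincidence orders.) On the credit side, you do handle $r(C)=0$, which the statement includes but the paper's proof silently omits, and your observation that the degeneracy claim cannot hold verbatim when $g=2$ --- Proposition \ref{4NG} yields a \emph{non-degenerate} discriminant there --- is a correct caveat to the theorem as stated.
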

\begin{proof}
Suppose first that $r(C)=1$. By \cite[Corollary 8]{B-Hefez} the branch $C$ is analytically equivalent to the curve  defined by the equation $f(x,y)=y^{s_0}-x^{s_1}+x^{s_1-2}y^{s_0-2}$, where $2\leq s_0<s_1$ are coprime integers. Hence  $f_y(x,y)=s_0y^{s_0-1}+(s_0-2)x^{s_1-2}y^{s_0-3}=y^{s_0-3}(s_0y^2+(s_0-2)x^{s_1-2})$, which Newton-Puiseux roots are $\alpha_1=0$ (with multiplicity $s_0-3$ ), $\alpha_2=\sqrt{\frac{2-s_0}{s_0}}x^{\frac{s_1-2}{2}}$ and 
$\alpha_3=-\sqrt{\frac{2-s_0}{s_0}}x^{\frac{s_1-2}{2}}$. After (\ref{disc-e}) we obtain that the Newton-Puiseux roots of the discriminant curve are

\begin{enumerate}
\item $\delta_1=u^{s_1}$ with multiplicity ${s_0-3}$,
\item $\delta_2=-u^{s_1}+\left(\left(\sqrt{\frac{2-s_0}{s_0}}\right)^{s_0}+\left(\sqrt{\frac{2-s_0}{s_0}}\right)^{s_0-2}\right)u^{\frac{(s_1-2)s_0}{2}}$,
\item $\delta_3=-u^{s_1}-\left(\left(\sqrt{\frac{2-s_0}{s_0}}\right)^{s_0}+\left(\sqrt{\frac{2-s_0}{s_0}}\right)^{s_0-2}\right)u^{\frac{(s_1-2)s_0}{2}}$.
\end{enumerate}

Hence, if $s_1$ and $s_0$ are odd  then  the discriminant curve is given by $D(u,v)=D_1(u,v)^{s_0-3}D_2(u,v),$ 
where $D_1(u,v)=(v-u^{s_1})$ and $D_2(u,v)$ is a branch of semigroup $\langle 2, (s_1-2)s_0\rangle$ and the intersection multiplicity $i_0(D_1,D_2)=\min\left\{s_1,(s_1-2)n\right\}$. Otherwise $D(u,v) $ is the product of $D_1(u,v)^{s_0-3}$ and two smooth branches $D_2(u,v)$ and $D_3(u,v)$, where $i_0(D_1,D_k)=\min\left\{s_1,\frac{(s_1-2)s_0}{2}\right\}$ for $2\leq k \leq 3$ and $i_0(D_2,D_3)=\frac{(s_1-2)s_0}{2}$.

\medskip
Suppose now that $r(C)=2$. In this case we get $g\leq 2$. If $g=2$ then by 
\cite[Corollary 13]{B-Hefez} the multiplicity of $C$ is 4 and we study this case in Proposition \ref{4NG}.

\medskip

For $g=1$, after  \cite[Theorem 17, Corollary 18]{B-Hefez}, we have two normal forms
\begin{enumerate}
\item [(A)] $f(x,y)=y^{s_0}-x^{s_1}+x^{s_1-3}y^{s_0-2},$ with $2<s_0<s_1$.
\item [(B)] $f(x,y)=y^{s_0}-x^{s_1}+x^{s_1-2}y^{s_0-3}+\left(\sum_{k \geq 2}^{2+[\frac{s_1}{s_0}]}a_{k}x^{s_1-k}\right)y^{s_0-2},$ with $4\leq s_0<s_1$, $\frac{2s_0}{s_0-3}<s_1$ and $a_k\in \C$.
\end{enumerate}

\medskip
In the case (A) we get $f_y(x,y)=y^{s_0-3}(s_0y^2+(s_0-2)x^{s_1-3}),$ which Newton-Puiseux roots are $\alpha_1=0$ (with multiplicity $s_0-3$ ), $\alpha_2=\sqrt{\frac{2-s_0}{s_0}}x^{\frac{s_1-3}{2}}$ and 
$\alpha_3=-\sqrt{\frac{2-s_0}{s_0}}x^{\frac{s_1-3}{2}}$. After (\ref{disc-e}) we obtain that the Newton-Puiseux roots of the discriminant curve are

\begin{enumerate}
\item $\delta_1=u^{s_1}$ with multiplicity ${s_0-3}$,
\item $\delta_2=-u^{s_1}+\left(\left(\sqrt{\frac{2-s_0}{s_0}}\right)^{s_0}+\left(\sqrt{\frac{2-s_0}{s_0}}\right)^{s_0-2}\right)u^{\frac{(s_1-3)s_0}{2}}$,
\item $\delta_3=-u^{s_1}-\left(\left(\sqrt{\frac{2-s_0}{s_0}}\right)^{s_0}+\left(\sqrt{\frac{2-s_0}{s_0}}\right)^{s_0-2}\right)u^{\frac{(s_1-3)s_0}{2}}$.
\end{enumerate}

Hence, if  $s_0$ is odd and $s_1$ is even then the discriminant curve is given by $D(u,v)=D_1(u,v)^{s_0-3}D_2(u,v),$ 
where $D_1(u,v)=(v-u^{s_1})$ and $D_2(u,v)$ is a branch of semigroup $\langle 2, (s_1-3)s_0\rangle$ and the intersection multiplicity $i_0(D_1,D_2)=\min\left\{s_1,(s_1-3)s_0\right\}$. Otherwise $D(u,v) $ is the product of $D_1(u,v)^{s_0-3}$ and two smooth branches $D_2(u,v)$ and $D_3(u,v)$, where $i_0(D_1,D_k)=\min\left\{s_1,\frac{(s_1-3)s_0}{2}\right\}$ for $2\leq k \leq 3$ and $i_0(D_2,D_3)=\frac{(s_1-3)s_0}{2}$.
\medskip

In the case (B) we have $f_y(x,y)=y^{s_0-4}\left(s_0y^3+(s_0-3)x^{s_1-2}+(s_0-2)\left(\sum_{k \geq 2}^{2+[\frac{s_1}{s_0}]}a_{k}x^{s_1-k}\right)y\right),$ hence its Newton polygon coincides with the Newton polygon determined by   $(0,3)$, 
$(s_1-2,0)$ and $(s_1-2-[\frac{s_1}{s_0}],1)$. But, after the inequality $\frac{2s_0}{s_0-3}<s_1$, we get that this Newton polygon has only two points which are its vertices: $(0,3)$ and $(s_1-2,0)$. The Newton-Puiseux roots of $f_y(x,y)=0$ are $\alpha_1=0$ (with multiplicity $s_0-4$) and $\alpha_i=\xi_i\sqrt[3]{\frac{s_0-3}{s_0}}u^{\frac{s_0-2}{3}}+\cdots$, where $\xi_i$ is a cubic root of the unity, $1\leq i\leq 3$. Then, after (\ref{disc-e}) the Newton-Puiseux roots of the discriminant curve are $\delta_1=u^{s_1}$ with multiplicity ${s_0-4}$ and

\begin{eqnarray*}\delta_i&=&-u^{s_1}+\left[\left(\xi_i\sqrt[3]{\frac{s_0-3}{s_0}}\right)^{s_0}+\left(\xi_i\sqrt[3]{\frac{s_0-3}{s_0}}\right)^{s_0-3}\right]u^{\frac{(s_1-2)s_0}{3}}\\
&+&\left(\xi_i\sqrt[3]{\frac{s_0-3}{s_0}}\right)^{s_0-2}a_{2+[\frac{s_1}{s_0}]}u^{\frac{(s_1-2)(s_0-2)}{3}+(s_1-2)-[\frac{s_1}{s_0}]}+\cdots,\\
\end{eqnarray*} for  $1\leq i\leq 3$.

If $s_1-2$ (respectively $s_0$) and $3$ are coprime then the discriminant curve is given by $D(u,v)=D_1(u,v)^{s_0-4}D_2(u,v),$ 
where $D_1(u,v)=(v-u^{s_1})$ and $D_2(u,v)$ is a branch of semigroup $\langle 3, (s_1-2)s_0\rangle$ and, after Halphen-Zeuthen formula, the intersection multiplicity $i_0(D_1,D_2)=3s_1$. Otherwise $D(u,v) $ is the product of $D_1(u,v)^{s_0-4}$ and three smooth branches $D_k(u,v)$, $2\leq k\leq 4$, where $i_0(D_1,D_k)=s_1$ for $2\leq k \leq 4$ and $i_0(D_l,D_r)=\frac{(s_1-2)s_0}{3}$, for $2\leq l\neq r\leq 4$.
\end{proof}

\begin{remark}
Observe that in Theorem \ref{TM} the Case (B) with $s_0=4$ coincides with the case $\sigma_3$ in  (\ref{sigmas}) for $j=2$. Hence this case was studied in Proposition \ref{1-4}.
\end{remark}

\section{Tables}
\label{tablas}
\noindent The following tables collect the topological type of the discriminants for branches studied in this paper:

{\footnotesize
\begin{table}[H]
\begin{center}
\begin{tabular}{|l|l|}
\hline \multicolumn{2}{|c|}{{\bf Multiplicity $2$}} \\
	\hline {\bf Normal form} &{\bf Discriminant $D(u,v)$} \\
	\hline
	\hline $\lambda=0$& $v+u^{s_1}$ \\
	\hline 
\end{tabular}
\caption{Discriminants of branches of semigroup $\langle 2,s_1\rangle$}
\label{tm2}	
\end{center}
\end{table}
}

{\footnotesize
\begin{table}[H]
\begin{center}
\begin{tabular}{|l|l|}
\hline \multicolumn{2}{|c|}{{\bf Multiplicity $3$}} \\
	\hline {\bf Normal form} &{\bf Discriminant $D(u,v)$} \\
	\hline
	\hline $\lambda=0$& $(v+u^{s_1})^2$ \\
	\hline $\lambda\neq0$&$D_1 D_2,\,\,\,\,\,\,m(D_i)=1,$   $\,\,\,\,$ $i_0(D_1,D_2)=\frac{s_1+\lambda}{2}$ $\;$ if $\gcd(2,s_1+\lambda)=2$ \\	
	&$D_1$, $\;\;\;\,\;\;\;\;S(D_1)=\langle 2,s_1+\lambda\rangle$    $\;\;\;\;\;\;\;\;\;\;\;\;\;\;\;\;\;\;\;\;\;\;$ if $\gcd(2,s_1+\lambda)=1$\\ 	\hline
\end{tabular}
\caption{Discriminants of branches of semigroup $\langle 3,s_1\rangle$}
\label{m3}	
\end{center}
\end{table}

\begin{table}[H]
\begin{center}
\begin{tabular}{|l|l|}
\hline \multicolumn{2}{|c|}{{\bf Multiplicity $4$ and $g=2$}} \\
	\hline {\bf Normal form} &{\bf Discriminant $D(u,v)$} \\
	\hline
	\hline $\;\;\lambda=s_2-6$& $D_1D_2,\,\,\,\,\,\,m(D_1)=1,\,\,\,\,\,\,S(D_2)=\langle 2,s_2\rangle,\,\,\,\,\,\,  i_0(D_1,D_2)=2s_1$ \\
	\hline
\end{tabular}
\caption{Discriminants of branches of semigroup $\langle 4,6,s_2\rangle$}
\label{m4-g=2}	
\end{center}
\end{table}
}

{\footnotesize
\begin{table}[H]
\begin{center}
\begin{tabular}{|l|l|}
\hline \multicolumn{2}{|c|}{{\bf Multiplicity $4$ and $g=1$}} \\
	\hline {\bf Normal form} &{\bf Discriminant $D(u,v)$} \\
	\hline
	\hline $\;\; \lambda=0$& $(v+u^{s_1})^3$ \\
	\hline $\begin{array}{l}\lambda=2s_1-4j\neq0\\ 2\leq j \leq [\frac{{s_1}}{4}]\end{array}$&$D_1 D_2 D_3,\,\,\,\,\,\,m(D_i)=1,$   $\,\,\,\,$ $i_0(D_l,D_r)=\frac{2s_1+\lambda_i}{3}$ $\;\;\;$ if $\gcd(3,2s_1+\lambda_i)=3$, \\	
	&$D_1$, $\,\;\;\;\;\;\;\;\;\;\;$ $S(D_1)=\langle 3,2s_1+\lambda_i\rangle $   $\;\;\;\;\;\;\;\;\;\;\;\;\;\;\;\;\;\;\;\;\;\;\;$ if $\gcd(3,2s_1+\lambda_i)=1$\\ 	
	\hline
	\hline $\begin{array}{l}\lambda=3s_1-4j\neq0\\ 2\leq j \leq [\frac{{s_1}}{2}]\end{array}$& See Table 
	\ref{m4-g=1 V}\\ \hline
\end{tabular}
\caption{Discriminants of branches of semigroup $\langle 4,s_1\rangle$}
\label{m4-g=1 I-IV}	
\end{center}
\end{table}
}

{\footnotesize
\begin{table}[H]
\begin{center}
\begin{tabular}{|l|l|}
\hline \multicolumn{2}{|c|}{{\bf Multiplicity $4$, $g=1$, $\lambda=3s_1-4j\neq0\;\;2\leq j \leq [\frac{{s_1}}{2}]$}} \\
\hline {\bf Depending on the analytical invariants} &{\bf Discriminant $D(u,v)$} \\
	\hline
	\hline $\;\;\lambda$ & $D_1D_2^2,\;\;\;m(D_i)=1,\;\;\;i_0(D_1,D_2)=2(s_1-j)$\\ \hline
	\hline $\begin{array}{l}\lambda\;\;\hbox{\rm and} \;\;2s_1-4(j-[\frac{s_1}{4}]-k)\\ \hbox{\rm where }k=\min\{j\,:\,a_j\neq 0\}\\ \end{array}$& see Table \ref{Case B}	(case B of Proposition \ref{sigma5})\\
	\hline $\begin{array}{l}\lambda,\;\;2s_1-4(j-[\frac{s_1}{4}]-k) \;\;\hbox{\rm and } 2s_1-4(j-[\frac{s_1}{4}]-k-s)\\
	\hbox{\rm where } k=\min\{j\,:\,a_j\neq 0\}, s=\min\{j\;:\;a_{k+j}\neq 0\,, j>0\} \end{array}$& see Table \ref{Case C} (case C of Proposition \ref{sigma5})	\\ \hline
\end{tabular}
\caption{}
\label{m4-g=1 V}	
\end{center}
\end{table}
}

{\footnotesize
\begin{table}[H]
\begin{center}
\begin{tabular}{|l|l|}
\hline \multicolumn{2}{|c|}{{\bf Case B of Proposition \ref{sigma5}}} \\
\hline $\;$&{\bf Discriminant $D(u,v)$} \\
	\hline
	\hline $\frac{2}{s_1-j}<\frac{1}{[\frac{s_1}{4}]+k}$& $\begin{array}{ll}D_1D_2D_3,\;\;\;m(D_l)=1,\;\;\;i_0(D_l,D_s)=\frac{4}{3}(s_1-j+[\frac{s_1}{4}]+k)& \hbox{\rm if} \gcd\{3,s_1-j+[\frac{s_1}{4}]+k\}=3\\
	D_1,\;\;\;\;\;\;\;\;\;\;\;\;S(D_1)=\langle 3, 4(s_1-j+[\frac{s_1}{4}]+k)\rangle&\hbox{\rm if} \gcd\{3,s_1-j+[\frac{s_1}{4}]+k\}=1\\
	\end{array}$\\ \hline
	\hline $\frac{2}{s_1-j}>\frac{1}{[\frac{s_1}{4}]+k}$& $\begin{array}{ll}D_1D_2D_3,\;\;\;m(D_l)=1,\;\;\;i_0(D_1,D_s)=2(s_1-j),\;\;i_0(D_2,D_3)=\frac{3}{2}(s_1-j)+[\frac{s_1}{4}]+k& \hbox{\rm if} \gcd\{2,s_1-j\}=2\\
	D_1D_2,\;\;\;\;\;\;\;m(D_1)=1\;\;\;\;S(D_2)=\langle2,3(s_1-j)+2([\frac{s_1}{4}]+k)\rangle\;\;\;i_0(D_1,D_2)=4(s_1-j)&\hbox{\rm if}
	 \gcd\{2,s_1-j\}=1\\
	\end{array}$\\
	\hline $\frac{2}{s_1-j}=\frac{1}{[\frac{s_1}{4}]+k}$& $\begin{array}{ll}D_1D_2D_3,\;\;\;m(D_l)=1,\;\;\;i_0(D_l,D_s)=2(s_1-j)& \hbox{\rm if } a_k\neq \pm \frac{4\sqrt{6}}{9}\\
	D_1D_2,\;\;\;\;\;\;\;m(D_1)=1,\;\;\;S(D_2)=\langle2,5s_1-6j\rangle, \;\;\;i_0(D_1,D_2)=4(s_1-j)&\hbox{\rm if } a_k=\pm \frac{4\sqrt{6}}{9}
	\\ 
	\end{array}$\\\hline
\end{tabular}
\caption{}
\label{Case B}	
\end{center}
\end{table}
}

{\footnotesize
\begin{table}[H]
\begin{center}
\begin{tabular}{|l|l|}
\hline \multicolumn{2}{|c|}{{\bf Case C of Proposition \ref{sigma5}}} \\
\hline $\;$&{\bf Discriminant $D(u,v)$} \\
	\hline
	\hline $\frac{2}{s_1-j}<\frac{1}{[\frac{s_1}{4}]+k}$& $\begin{array}{ll}D_1D_2D_3,\;\;\;m(D_l)=1,\;\;\;i_0(D_l,D_s)=\frac{4}{3}(s_1-j+[\frac{s_1}{4}]+k)& \hbox{\rm if} \gcd\{3,s_1-j+[\frac{s_1}{4}]+k\}=3\\
	D_1,\;\;\;\;\;\;\;\;\;\;\;\;S(D_1)=\langle 3, 4(s_1-j+[\frac{s_1}{4}]+k)\rangle&\hbox{\rm if} \gcd\{3,s_1-j+[\frac{s_1}{4}]+k\}=1\\
	\end{array}$\\ \hline
	\hline $\frac{2}{s_1-j}>\frac{1}{[\frac{s_1}{4}]+k}$& $\begin{array}{ll}D_1D_2D_3,\;\;\;m(D_l)=1,\;\;\;i_0(D_1,D_s)=2(s_1-j),\;\;i_0(D_2,D_3)=\frac{3}{2}(s_1-j)+[\frac{s_1}{4}]+k& \hbox{\rm if} \gcd\{2,s_1-j\}=2\\
	D_1D_2,\;\;\;\;\;\;\;m(D_1)=1\;\;\;\;S(D_2)=\langle2,3(s_1-j)+2([\frac{s_1}{4}]+k)\rangle\;\;\;i_0(D_1,D_2)=4(s_1-j)&\hbox{\rm if}
	 \gcd\{2,s_1-j\}=1\\
	\end{array}$\\
	\hline $\frac{2}{s_1-j}=\frac{1}{[\frac{s_1}{4}]+k}$& $\begin{array}{ll}D_1D_2D_3,\;\;\;m(D_l)=1,\;\;\;i_0(D_l,D_s)=2(s_1-j)& \hbox{\rm if } a_k\neq \pm \frac{4\sqrt{6}}{9}\\
	\hbox{\rm see Table } \ref{subC}&\hbox{\rm if } a_k=\pm \frac{4\sqrt{6}}{9}
	\\ 
	\end{array}$\\\hline
\end{tabular}
\caption{}
\label{Case C}	
\end{center}
\end{table}
}

{\tiny
\begin{table}[H]
\begin{center}
\begin{tabular}{|l|l|}
\hline $\;$&{\bf Discriminant $D(u,v)$} \\
	\hline
	\hline $s_1-2j>s$& $\begin{array}{ll}D_1D_2,\;\;\;\;\;\;\;m(D_1)=1,\;\;\;S(D_1)=\langle 2, 4(s_1-j)+3s\rangle,\;\;\;\;i_0(D_1,D_2)=4(s_1-j)&\hbox{\rm if } s \;\;\hbox{\rm is odd}\\ 
	D_1D_2D_3,\;\;\;m(D_l)=1,\;\;\;i_0(D_1,D_r)=2(s_1-j),\;\;\;\;\;\;\;\;\;\;\;\;i_0(D_2,D_3)=2(s_1-j)+\frac{3}{2}s &\hbox{\rm if } s\;\, \hbox{\rm is even}\\ 
	\end{array}$\\
	\hline
	\hline $s_1-2j<s$& $\;\;D_1D_2,\;\;\;\;\;\;\;m(D_1)=1,\;\;\;S(D_1)=\langle 2, 7s_1-10j\rangle,\;\;\;\;i_0(D_1,D_2)=4(s_1-j)$
	\\ \hline
	\hline $s_1-2j=s$ \;\;\hbox{\rm and } $a_{k+s}\neq \pm \frac{4\sqrt{6}}{81}$& $\begin{array}{ll}D_1D_2D_3,\;\;\;m(D_l)=1,\;\;\;i_0(D_l,D_r)=2(s_1-j)\;\; &\hbox{\rm if } \gcd(2,s)=2\\
	D_1D_2\;\;\;\;\;\;\;\;\;m(D_1)=1,\;\;\;S(D_2)=\langle 2, 7s_1-10j\rangle,\;\;\;i_0(D_1,D_2)=4(s_1-j)\;\; &\hbox{\rm if } \gcd(2,s)=1
	\end{array}$
	\\ 
	\\ \hline
	\hline $s_1-2j=s$ \;\;\hbox{\rm and } $a_{k+s}=\pm \frac{4\sqrt{6}}{81}$& $D_1D_2D_3,\;\;\;m(D_l)=1,\;\;\;i_0(D_1,D_r)=2(s_1-j),\;\;i_0(D_2,D_3)=4s_1-6j$.
	\\ 
\\ \hline
\end{tabular}
\caption{}
\label{subC}	
\end{center}
\end{table}
}

{\tiny
\begin{table}[H]
\begin{center}
\begin{tabular}{|l|l|}
\hline \multicolumn{2}{|c|}{{$\mathbf {r(C)=\mu(C)-\tau(C)\leq 2}$}} \\
	\hline {\bf Normal form} &{\bf Discriminant $D(u,v)$} \\
	\hline
	\hline $\begin{array}{l} r(C)=1\\S(f)=\langle s_0,s_1\rangle\end{array}$& $\begin{array}{ll}D_1^{s_0-3}D_2,\;\;\;\;\;D_1=(v-u^{s_1}),\;\;\;\;\;S(D_2)=\langle 3,s_1-2\rangle,\;\;\;i_0(D_1,D_2)=\min\left\{s_1,(s_1-2)s_0\right\}&\hbox{\rm if $s_1,s_0$ coprime} \\
	D_1^{s_0-3}D_2D_3\;\;D_1=(v-u^{s_1}),\;\;\;m(D_i)=1,\;\;i_0(D_1,D_k)=\min\left\{s_1,\frac{(s_1-2)s_0}{3}\right\}\;\; i_0(D_2,D_3)=\frac{(s_1-2)s_0}{3}&\hbox{\rm otherwise} 
	\end{array}$
 \\
 \hline
	\hline $\begin{array}{l} r(C)=2\\S(f)=\langle 4,s_1,s_2\rangle \end{array}$& $D_1D_2,\,\,\,\,\,\,m(D_1)=1,\,\,\,\,\,\,S(D_2)=\langle 2,s_2\rangle,\,\,\,\,\,\,  i_0(D_1,D_2)=2s_1$ \\
	 \hline
	\hline $\begin{array}{l} r(C)=2\\S(f)=\langle s_0,s_1\rangle \end{array}$& see Table \ref{TjurinaA} and Table \ref{TjurinaB} \\
	\hline 
\end{tabular}
\caption{Discriminants of branches with $r(C)=\mu(C)-\tau(C)\leq 2$}
\label{m2}	
\end{center}
\end{table}
}

{\tiny
\begin{table}[H]
\begin{center}
\begin{tabular}{|l|l|}
\hline \multicolumn{2}{|c|}{{\bf{Normal form:} $\mathbf{f(x,y)=y^{s_0}-x^{s_1}+x^{s_1-3}y^{s_0-2}},$ \bf{with} $\mathbf{2<s_0<s_1}$}} \\
	\hline $\;$ &{\bf Discriminant $D(u,v)$} \\
	\hline
	\hline $s_0$ odd, $s_1$ even& $D_1^{s_0-3}D_2,\;\;\;\;\;D_1=(v-u^{s_1}),\;\;\;\;\;S(D_2)=\langle 2,(s_1-3)s_0\rangle,\;\;\;i_0(D_1,D_2)=\min\left\{s_1,(s_1-3)s_0\right\}$
 \\
 \hline
	\hline otherwise& $D_1^{s_0-3}D_2D_3,\,\,\,\,\,\,m(D_i)=1,\,\,\,\,i_0(D_1,D_k)=\min\left\{s_1,\frac{(s_1-3)s_0}{2}\right\},\,\,\,\,\,\,\,\,  i_0(D_2,D_3)=\frac{(s_1-3)s_0}{2}$ \\
	 	\hline 
\end{tabular}
\caption{}
\label{TjurinaA}
\end{center}
\end{table}
}

{\tiny
\begin{table}[H]
\begin{center}
\begin{tabular}{|l|l|}
\hline \multicolumn{2}{|c|}{{\bf{Normal form:} $\mathbf{f(x,y)=y^{s_0}-x^{s_1}+x^{s_1-2}y^{s_0-3}+\left(\sum_{k \geq 2}^{2+[\frac{s_1}{s_0}]}a_{k}x^{s_1-k}\right)y^{s_0-2},}$  \bf{with} $\mathbf{4\leq s_0<s_1, \frac{2s_0}{s_0-3}<s_1}$ \bf{and} $\mathbf{a_k\in \C}$}} \\
	\hline $\;$ &{\bf Discriminant $D(u,v)$} \\
	\hline
	\hline $s_1-2$ (resp. $s_0$) and $3$ coprime & $D_1^{s_0-4}D_2,\;\;\;\;\;D_1=(v-u^{s_1}),\;\;\;\;\;S(D_2)=\langle 3,(s_1-2)s_0\rangle,\;\;\;i_0(D_1,D_2)=3s_1$
 \\
 \hline
	\hline otherwise& $D_1^{s_0-4}D_2D_3D_4,\,\,\,\,\,\,m(D_i)=1,\,\,\,\,i_0(D_1,D_k)=s_1,\,\,\,\,\,\,\,\,  i_0(D_l,D_r)=\frac{(s_1-2)s_0}{3},\;\;2\leq l\neq r\leq 4$ \\
	 	\hline 
\end{tabular}
\caption{}
\label{TjurinaB}
\end{center}
\end{table}
}

\end{document}